\renewcommand\ge\geqslant
\renewcommand\geq\geqslant
\renewcommand\le\leqslant
\renewcommand\leq\leqslant
\numberwithin{equation}{section}
\newcommand{\refpart}[1]{{\it (#1)}}  
\newcommand{\PP}{\mathbb{P}}
\newcommand{\RR}{\mathbb{R}}
\newcommand{\DD}{{\mathcal D}}
\newcommand{\DI}{{\mathcal I}}
\newcommand{\DR}{{\mathcal R}}
\newcommand{\cc}{\lambda} 
\newcommand{\FG}{\mathcal{F}}
\newtheorem{theorem}{Theorem}[section]
\newtheorem{lemma}[theorem]{Lemma} 
\newtheorem{propose}[theorem]{Proposition}
\newtheorem{remark}[theorem]{Remark}
\newcommand{\pcoor}[1]{%
	\begingroup\lccode`~=`: \lowercase{\endgroup
		\edef~}{\mathbin{\mathchar\the\mathcode`:}\nobreak}%
	(
	\begingroup
	\mathcode`:=\string"8000
	#1%
	\endgroup 
	)
}
\title{\bf Dupin cyclides passing through a fixed circle}
\author{
	Jean Michel Menjanahary$^a$\; and\; Raimundas Vidunas$^b$ \\
	\em $^a$Institute of Computer Science, Vilnius University\\
	\em $^b$Institute of Applied Mathematics, Vilnius University
}
\date{\empty}
\begin{document}
	\maketitle
	
	\begin{abstract}
		We derive algebraic equations on the coefficients of the implicit equation to characterize all Dupin cyclides passing through a fixed circle.
		The results are applied to solve the basic problems in CAGD about blending of Dupin cyclides along circles.
	\end{abstract}
	
	\section{Introduction}
	
	Dupin cyclides are remarkable algebraic surfaces that have applications 
	in Computer Aided Geometric Design (CAGD). 
	The prototypical example of a Dupin cyclide is a torus of revolution with major radius $R$ and minor radius $r$.
	A canonical implicit equation of a torus is
	\begin{equation}
		\label{eq:torus}
		\big(x^2+y^2+z^2+R^2-r^2\big)^2-4R^2(x^2+y^2)=0.
	\end{equation}
	We must have $r<R$ for a smooth torus surface. 
	Any torus contains two orthogonal circles through each point.
	These circles are curvature lines of the torus, and are called {\em principal circles}.
	A smooth torus has two additional circles through each point  on a bitangent plane to the torus; see Figure \ref{fig:torus}\refpart{a}.
	They are called {\em Villarceau circles} \cite{Vill}.
	\begin{figure}
		\begin{center}
			\begin{picture}(320,100)
				\put(10,0){\includegraphics[width=4.2cm]{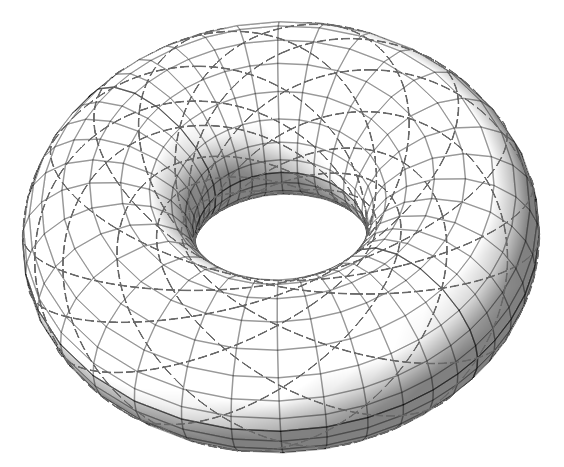}}
				\put(180,0){\includegraphics[width=4.5cm]{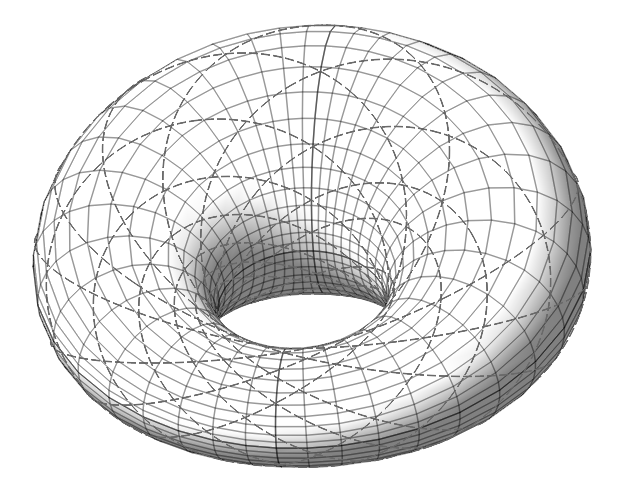}}
				\put(10,-4){\refpart{a}}
				\put(180,-4){\refpart{b}}
			\end{picture}
		\end{center}
		\caption{A smooth torus \refpart{a} and a smooth Dupin cyclide \refpart{b}. 
			The solid circles are principal circles and the dashed circles are Villarceau circles.}
		\label{fig:torus}
	\end{figure}
	A Dupin cyclide is the image of a torus under a M\"obius transformation, in particular an orthogonal transformation
	or an inversion with respect to a sphere. 
	These transformations preserve the angles and the set of circles and lines on the surfaces \cite{MV, Ottens}.
	Accordingly, smooth Dupin cyclides inherit the property of having 2 principal circles 
	and 2 Villarceau circles through each point; see Figure \ref{fig:torus}\refpart{b}.
	Some of these circles may degenerate to straight lines.
	Dupin cyclides are used in CAGD and architecture, predominantly for blending surfaces along the circles to model more elaborate CAGD surfaces 
	\cite{Martin, DuPratt1, DuPratt2, Zube, Kra, Lionel, Architecture, Salkov},
	or smooth blending them with natural quadrics and canal surfaces along the circles  \cite{Blend1, Blend2, Druoton}.
	
	The implicit equation for a Dupin cyclide is of degree 4 or 3 and has the form
	\begin{align} \label{eq:gendarb}
		a_0\big(x^2+y^2+z^2\big)^2&+2(b_1x+b_2y+b_3z)\big(x^2+y^2+z^2\big)\nonumber\\
		&+c_1x^2+c_2y^2+c_3z^2+2d_1yz+2d_2xz+2d_3xy\\
		&+2e_1x+2e_{2}y+2e_{3}z+f_{0} =  0, \nonumber
	\end{align}
	where $a_0,b_1,\ldots,f_0 \in \RR$. 
	For general values of the coefficients, this implicit equation defines a more general surface called {\em Darboux cyclide} \cite{DarCyc}.
	The practical problem of distinguishing Dupin cyclides among Darboux cyclides is considered in \cite{MV}.
	
	The basic problem considered in this paper is smooth blending of two Dupin cyclides along a fixed circle.
	Our approach is to match implicit equations (\ref{eq:gendarb}) for the two Dupin cyclides we blend.
	To solve the basic problem algebraically, we first consider the general linear family of Darboux cyclides
	passing through a fixed circle. Then we use the results in \cite{MV} to characterize the smaller family of Dupin cyclides  
	in terms of the algebraic relations for the free coefficients of the general family of Darboux cyclides.
	This is considered in Section  \ref{sec:cagda} together with the formulation of the main results of the paper. We prove them separately for quartic and cubic equations in Sections \ref{sec:split} and \ref{sec:proofs}.
	The smooth blending between two implicit equations of  Dupin cyclides along the fixed circle is investigated in Section \ref{sec:join}.  In the last section, we express the M\"obius invariant of a Dupin cyclide equation, introduced in \cite{MV}, to our particular smaller families of Dupin cyclides.
	
	\section{Preliminaries} 
	\label{sec:prelim}
	
	First off, let us recall the salient results in \cite{MV} on distinguishing Dupin cyclides among Darboux cyclides.
	They are formulated using the following abbreviations
	of algebraic expressions in the coefficients in (\ref{eq:gendarb}):
	\begin{align*}
		B_0 = & \; b_1^2+b_2^2+b_3^2,\\
		C_0 = & \; c_1+c_2+c_3,\\ 
		E_0 = & \; e_1^2+e_2^2+e_3^2,\\
		W_1 = &\;  c_1c_2+c_1c_3+c_2c_3-d_1^2-d_2^2-d_3^2,\\
		W_2 = &\; c_1c_2c_3+2d_1d_2d_3-c_1d_1^2-c_2d_2^2-c_3d_3^2, \\
		W_3 = &\; b_1^2c_1 + b_2^2c_2 + b_3^2c_3 + 2b_2b_3d_1 + 2b_1b_3d_2 + 2b_1b_2d_3,\\
		W_4 = &\;c_1e_1^2+c_2e_2^2+c_3e_3^2+2d_{1} e_{2} e_{3} + 2d_{2} e_{1} e_{3}  + 2 d_{3} e_{1} e_{2}.
	\end{align*}
	Let $\sigma_{12}, \sigma_{13}$ denote the permutations of the variables $b_1,b_2,b_3; c_1,c_2,c_3; d_1,d_2,d_3$ and $e_1,e_2,e_3$ that permute the indices $1,2$ or $1,3$, respectively.
	
	To recognize quartic Dupin cyclides among the form (\ref{eq:gendarb}), we can assume $a_0=1$ by dividing all coefficients by $a_0$. 
	Then we apply the shift 
	\begin{equation} \label{eq:p3shift} \textstyle
		(x,y,z)\mapsto (x,y,z) - \frac{1}{2}(b_1,b_2,b_3)
	\end{equation}
	to remove the cubic terms and reduce the equation to an intermediate Darboux form
	\begin{align} \label{eq:gendarb1}
		\big(x^2+y^2+z^2\big)^2&+c_1x^2+c_2y^2+c_3z^2+2d_1yz+2d_2xz+2d_3xy\\
		&+2e_1x+2e_{2}y+2e_{3}z+f_{0} =  0. \nonumber
	\end{align}
	\begin{theorem}
		\label{thm:main}
		The hypersurface in $\RR^3$ defined by $(\ref{eq:gendarb1})$ is a Dupin cyclide 
		only if the following $12$ polynomials vanish:
			\begin{align*}
				K_1 = & \; (c_3-c_2)e_2e_3+d_1(e_2^2-e_3^2)+(d_2e_2-d_3e_3)e_1,\\
				L_1 = & \; \big(W_1+4f_0-(c_2+c_3)^2-d_2^2- d_3^2\big)e_1 \\
				& +\!\big(C_0d_3+ c_3d_3- d_1d_2\big)e_2 +\big(C_0d_2+ c_2d_2- d_1d_3\big)e_3,\\
				M_1 = &  \; 2(c_1e_1+d_3e_2+d_2e_3)(W_1+4f_0)+e_1(W_2-C_0W_1-4E_0),\\
				N_1 = & \; \big(4W_1+12f_0-3 C_0^2\big)(W_1+4f_0) - 2C_0(W_2-C_0W_1-6E_0)-4\,W_4, \\
				N_2 = & \; 4(W_2\!-\!C_0W_1\!-\!2E_0)(W_1\!+\!4f_0) +\big(C_0^2\!-\!4f_0\big) \big(W_2\!+\!C_0W_1\!+\!8C_0f_0\!-\!4E_0\big), \\
				N_3 = & \; \big(\, W_2+C_0W_1+8C_0f_0-4E_0\big)^{\!2}-4(W_1+4f_0)^3,
			\end{align*}
			and $\sigma_{12}K_1,\;\sigma_{12}L_1,\;  \sigma_{12}M_1, \sigma_{13}K_1,\;\sigma_{13}L_1,\; \sigma_{13}M_1$.
		\end{theorem}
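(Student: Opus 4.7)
The approach is to prove necessity via the M\"obius (inversive) model of $\RR^3$: lift $(x,y,z)$ to the null vector $X = (1+\rho^2,\,2x,\,2y,\,2z,\,1-\rho^2) \in \RR^{4,1}$, with $\rho^2=x^2+y^2+z^2$, so that the M\"obius null quadric is $J^*(X) = X_0^2 - X_1^2 - X_2^2 - X_3^2 - X_4^2 = 0$. Multiplying (\ref{eq:gendarb1}) by $4$ and homogenising, the left-hand side becomes a quadratic form $Q(X)$ on this null cone: $4\rho^4 = (X_0-X_4)^2$, each $c_i x_i^2$ becomes $c_i X_i^2$, each cross term $2 d_i$ becomes $2d_i X_j X_k$, each linear $2 e_i x_i$ homogenises to $e_i X_i(X_0+X_4)$, and $f_0$ to $\tfrac{f_0}{4}(X_0+X_4)^2$. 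The cyclide is then the intersection $\{J^*=0\}\cap\{Q=0\}$.

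The Dupin criterion to invoke is the standard one in this model: the pencil $\{Q-\lambda J^*\}_\lambda$ must contain a member of corank at least $2$, equivalently $P(\lambda) := \det(Q - \lambda J^*)$ has a repeated root at which $Q - \lambda J^*$ drops rank by two. The first step is to expand $P(\lambda)$ as a degree-$5$ polynomial in $\lambda$. Its coefficients are built from principal minors of $Q$ relative to $J^*$, and matching indices shows that the scalar invariants $C_0, W_1, W_2, E_0, f_0$ of the preliminaries appear in predictable combinations. The discriminant-vanishing condition for a repeated root should then, after separating the factors of $P(\lambda)$ forced by the shift (\ref{eq:p3shift}), collapse to a single relation of the form $\Delta^2 = 4\delta^3$, which is visibly $N_3$: here $\Delta = W_2+C_0W_1+8C_0f_0-4E_0$ and $\delta = W_1+4f_0$ play the role of the coefficients of a depressed cubic factor of $P(\lambda)$. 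The companion relations $N_1, N_2$ should appear as the subresultants of that cubic, locating the repeated root implicitly rather than by extraction.

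The relations $K_1, L_1, M_1$ and their $\sigma_{12}, \sigma_{13}$-images encode the required rank drop: at the repeated eigenvalue $\lambda_0$, every $4\times 4$ minor of $Q - \lambda_0 J^*$ must vanish. Choosing row/column removals that distinguish the spatial $(X_1,X_2,X_3)$-block from the isotropic $(X_0,X_4)$-block, and then eliminating $\lambda_0$ via the $N$-relations, should collapse the resulting minors into three $S_3$-orbits. Each orbit has size three because $K_1, L_1, M_1$ are antisymmetric under $\sigma_{23}$ (as one checks from the explicit formulas), so the six transpositions of the three spatial indices produce, up to sign, only the three polynomials $K_1, \sigma_{12}K_1, \sigma_{13}K_1$ and analogously for $L_1$ and $M_1$, yielding the full list of nine.

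The main obstacle is the bulk of the symbolic computation: $P(\lambda)$, its discriminant and subresultants, and eighteen $4\times 4$ minors all produce dense polynomials in twelve variables which must be reduced to the compact expressions in the statement. The workload is controlled by M\"obius covariance: the twelve polynomials cut out an $\mathrm{O}(4,1)$-invariant subvariety of the pencil data, so it suffices to verify the identities on representatives of the M\"obius orbits of Dupin cyclides. For a canonical torus fitting into the form (\ref{eq:gendarb1}) one has $d_i=e_i=0$ and only two distinct values among the $c_i$, collapsing each of the twelve identities to a short direct check.
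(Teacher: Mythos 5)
First, note that the paper does not actually prove this statement here: the stated ``proof'' is a citation to \cite[Proposition 3.6]{MV}, together with a list of degenerate surfaces that also satisfy the twelve equations. So your attempt cannot be matched against an in-paper argument; it must stand on its own, and as written it does not. What you have is a strategy sketch with two genuine gaps. The first is the ``Dupin criterion'' itself: you assert that the Darboux cyclide $\{J^*=0\}\cap\{Q=0\}$ is Dupin iff the pencil $Q-\lambda J^*$ contains a member of corank at least $2$. This is the right kind of statement in the pentaspherical model, but it is exactly the nontrivial geometric input, and you neither prove it nor state it precisely enough to use (in particular, the corank-$2$ condition also admits the degenerate configurations --- touching spheres, double spheres, point spheres --- that the paper explicitly warns must be separated out; your discriminant/subresultant bookkeeping would have to distinguish these, and you do not address how). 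For the necessity direction being claimed, you would at minimum need the forward implication of that criterion with a reference or proof.

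The second, more serious gap is the final reduction. You claim that because the twelve polynomials cut out an $\mathrm{O}(4,1)$-invariant subvariety, it suffices to verify the identities on a canonical torus with $d_i=e_i=0$. This is circular: the invariance of the common zero locus of these particular twelve polynomials under the induced action on coefficients is precisely what the ``bulk of the symbolic computation'' would establish, and it cannot be assumed in order to avoid that computation. Moreover, the normal form (\ref{eq:gendarb1}) (with $a_0=1$, $b_i=0$) is not preserved by general M\"obius transformations, so the group does not act on the twelve expressions in any way you have exhibited; and on the symmetric torus almost every term of $K_1,L_1,M_1$ vanishes identically, so the ``short direct check'' there verifies nothing about the general case. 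A workable computational route would be to parametrize a generic M\"obius image of the torus (inversion center, similarity, radii), renormalize to the form (\ref{eq:gendarb1}), and verify that all twelve polynomials vanish identically in those parameters --- but that is a different argument from the one you describe, and it is the one you would actually have to carry out. As it stands, none of $K_1,L_1,M_1,N_1,N_2,N_3$ is derived; they are only conjectured to arise as minors, discriminants and subresultants.
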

		\noindent{\em Proof.} 
		This result is covered in \cite[Proposition 3.6]{MV}. 
		We consider the necessity only because in this paper
		we start from the $12$ polynomial equations to find a Dupin cyclide.
		In addition, the $12$ polynomial equations define the following degenerate Dupin cyclides: two touching spheres, a sphere and a point on it, a double sphere, a circle, a cyclide with exactly 2, 1 or no real points; see \cite[Section 6.3]{MV}. We need to discard them from the algebraic equations.
		\qed 
		\begin{theorem}
			
			\label{thm:mainp}
			The hypersurface in $\RR^3$ defined by $(\ref{eq:gendarb})$ is a cubic Dupin cyclide only if
			the following equations are satisfied:
			\begin{align}\label{eq:cubic_result}
				a_0=&\, \textstyle 0, \qquad e_1= 
				\frac{1}{4}\,E_1, \qquad e_2=\frac{1}{4}\,\sigma_{12} E_1, \qquad e_3=\frac{1}{4}\,\sigma_{13} E_1,\\[2pt]
				f_0= &\, \frac{W_3}{4B_0^2}\left(\frac{W_3}{B_0}-C_0\right)^{\!2} +\frac{W_3W_1}{4B_0^2}+ \frac{W_2-C_0W_1}{4B_0},
			\end{align}
			where 
			{\small
				\begin{align*}
					E_1 = 
					&\;  -\frac{b_1}{B_0} \! \left(\frac{W_3}{B_0}-c_2-c_3\right)^{\!2} 
					+ \frac{2b_1^2}{B_0^2}\,(b_3c_3d_2 + b_2c_2d_3)
					- \frac{4b_1}{B_0^2}\,(b_3d_2 + b_2d_3)^2 \nonumber \\
					&\; + \frac{2(b_3d_2 + b_2d_3)}{B_0^2}\,(b_2^2c_1 + b_3^2c_1-2b_2b_3d_1)
					- \frac{2b_2b_3}{B_0^2}\,(c_2 - c_3)(b_2d_2 - b_3d_3)\nonumber\\
					&\;+\frac{b_1}{B_0}\,\big((c_1 - c_2)(c_1 - c_3)- d_1^2 + d_2^2 + d_3^2\big)
					+\frac{2d_1}{B_0}\,(b_2d_2 + b_3d_3).
				\end{align*}
			}
		\end{theorem}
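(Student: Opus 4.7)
The plan is to reduce the cubic case to Theorem \ref{thm:main} by applying a sphere inversion, exploiting the Möbius invariance of Dupin cyclides. The assertion $a_0=0$ is simply the definition of the cubic case, so the real work is to establish the explicit formulas for $e_1,e_2,e_3$ and $f_0$. Consider the unit-sphere inversion $\iota\colon(x,y,z)\mapsto(x,y,z)/(x^2+y^2+z^2)$. Substituting into (\ref{eq:gendarb}) with $a_0=0$ and clearing denominators by the factor $(x^2+y^2+z^2)^2$, one checks directly that a cubic cyclide with coefficients $(0,b_i,c_i,d_i,e_i,f_0)$ transforms into a Darboux cyclide with coefficients $(f_0,e_i,c_i,d_i,b_i,0)$; informally, $\iota$ exchanges the leading and trailing blocks of the equation. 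Provided $f_0\neq 0$ (which we may arrange by a preliminary translation in the exceptional case), the image is a genuine quartic Dupin cyclide.

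Next I would normalize by dividing through by $f_0$ and apply the shift (\ref{eq:p3shift}), which under the new normalization equals $-\tfrac{1}{2f_0}(e_1,e_2,e_3)$, to bring the inverted equation into the intermediate form (\ref{eq:gendarb1}). The resulting coefficients $\hat c_i,\hat d_i,\hat e_i,\hat f_0$ are explicit rational functions of $(b_i,c_i,d_i,e_i,f_0)$ with denominators a power of $f_0$. Substituting these into the twelve polynomials $K_1,L_1,M_1,N_1,N_2,N_3$ of Theorem \ref{thm:main}, together with their $\sigma_{12}$- and $\sigma_{13}$-images, and clearing denominators, I obtain twelve polynomial relations in the original variables $(b_i,c_i,d_i,e_i,f_0)$ that must vanish under the Dupin hypothesis.

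The final step is to solve this system for $e_1,e_2,e_3,f_0$. The $L$- and $M$-families should provide a subsystem that is linear in the $e_i$, whose coefficient matrix is built from the $b_j$; inverting that matrix naturally produces $B_0=b_1^2+b_2^2+b_3^2$ in denominators, accounting for the $1/B_0$ and $1/B_0^2$ factors visible in $E_1$ and in the stated expression for $f_0$. The remaining $N$-equations, which are quadratic in $f_0$, pin down $f_0$ to the asserted value. The main obstacle will be precisely this algebraic reduction: the shift introduces tangled rational expressions, the substituted polynomials have high combined degree in the original coefficients, and distilling them into the compact closed form for $E_1$ will require computer-algebra assistance together with careful use of the permutation symmetry $\sigma_{12},\sigma_{13}$ that relates the three components of $E_1$ and allows us to derive $e_2,e_3$ from $e_1$ automatically. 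Degenerate configurations such as $B_0=0$ or the exceptional case $f_0=0$ will need either a limiting argument or a direct geometric analysis to be integrated with the generic computation.
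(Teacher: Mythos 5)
The paper itself does not prove Theorem \ref{thm:mainp}: its ``proof'' is a citation of \cite[Theorem 2.4]{MV} together with a list of degenerate cases, so your argument has to stand on its own as an independent derivation. Your key observation is correct and is a reasonable starting point: the unit-sphere inversion swaps the coefficient blocks $(a_0,b_1,b_2,b_3)\leftrightarrow(f_0,e_1,e_2,e_3)$ in (\ref{eq:gendarb}) while fixing the $c_i,d_i$, and since the class of Dupin cyclides is M\"obius-invariant, a cubic Dupin cyclide with $f_0\neq 0$ inverts to a quartic one, to which the twelve necessary conditions of Theorem \ref{thm:main} apply after normalizing by $f_0$ and shifting by $-\tfrac{1}{2f_0}(e_1,e_2,e_3)$. (A small point you miss: $B_0=0$ cannot occur at all, since $a_0=0$ and $b_1=b_2=b_3=0$ make the equation at most quadratic; only the $f_0=0$ exception is genuine.)

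The genuine gap is that the decisive step is asserted rather than performed. After the swap, the original $e_i$ occupy the cubic-coefficient slots and enter Theorem \ref{thm:main}'s polynomials through the shift, hence nonlinearly and with $f_0$ in denominators, while it is the $b_i$ that land in the linear slots; your claim that the $L$- and $M$-families give a subsystem \emph{linear in the $e_i$} with coefficient matrix built from the $b_j$ is unsupported and, on the face of it, has the roles of $b$ and $e$ reversed. More importantly, nothing in the proposal carries out the elimination and verifies that it yields exactly $e_1=\tfrac14E_1$, $e_2=\tfrac14\sigma_{12}E_1$, $e_3=\tfrac14\sigma_{13}E_1$ and the stated $f_0$: the twelve conditions are only necessary and cut out a reducible variety (the degenerate list attached to Theorem \ref{thm:main} already shows this), so you must also rule out that a non-degenerate cubic Dupin cyclide lies on a component where $e_i,f_0$ are not uniquely determined by $b,c,d$ --- precisely the component analysis done in \cite{MV}, which you defer wholesale to unverified computer algebra. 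Finally, the $f_0=0$ case is only waved at: a preliminary translation changes all of $c_i,d_i,e_i,f_0$, so you still owe the argument (for instance continuity in the translation parameter, using that the asserted identities become polynomial after clearing powers of $B_0$) that the formulas for the \emph{original} coefficients follow. As written this is a plausible plan for a computer-assisted proof, not a proof.
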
  
		\noindent{\em Proof.} 
		This is covered in \cite[Theorem 2.4]{MV}. Degenerate cases are a sphere touching a plane or a plane and a point on it; see \cite[Section 6.3]{MV}.\qed
		
		\section{The main results}
		\label{sec:cagda}
		
		Without loss of generality, we assume that a fixed circle $\Gamma\subset \RR^3$ 
		with radius $r>0$, is given by the equations
		\begin{equation} \label{eq:circle}
			x=0, \qquad   y^2+z^2=r^2.
		\end{equation}
		Computing the variety of Dupin cyclides passing through the circle $\Gamma$ turns out to be non-trivial. 
		The defining equations are obtained by restricting the coefficients of (\ref{eq:gendarb}) to 
		cyclides passing through $\Gamma$ and 
		by considering the effects on the equations in Theorems \ref{thm:main} and \ref{thm:mainp}.
		The Darboux cyclides passing through the circle $\Gamma$ form a linear subspace of the space of coefficients in (\ref{eq:gendarb}).
		
		\begin{lemma}
			\label{lem:circleDarb}
			A Darboux cyclide passing through the circle $\Gamma$ has implicit equation of the form
			\begin{align} \label{eq:ldarb2}
				u_0(x^2+y^2+z^2-r^2)^2
				&+2(x^2+y^2+z^2-r^2)(u_1x+u_2y+u_3z+u_4) \nonumber\\
				&+ 2x\big(v_1x+v_2y+v_3z+v_4\big) =  0,
			\end{align}
			where $[u_0,\ldots,u_4;v_1,\ldots,v_4]\in \PP^8$.
		\end{lemma}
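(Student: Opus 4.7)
The plan is to substitute $\Gamma$ into (\ref{eq:gendarb}), read off the resulting linear constraints via Fourier analysis on the circle, count dimensions, and then check that (\ref{eq:ldarb2}) parametrizes the constrained subspace.

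Setting $x=0$ in (\ref{eq:gendarb}) kills the contributions of $a_0,b_1,c_1,d_2,d_3,e_1$ to the restriction and leaves
\[
P(y,z)=a_0(y^2+z^2)^2+2(b_2y+b_3z)(y^2+z^2)+c_2y^2+c_3z^2+2d_1yz+2e_2y+2e_3z+f_0.
\]
On $\Gamma$ I substitute $y=r\cos\theta$, $z=r\sin\theta$ and use $y^2+z^2=r^2$; the result is a trigonometric polynomial in $\theta$ of degree at most $2$. Vanishing of its five Fourier modes $1,\cos\theta,\sin\theta,\cos 2\theta,\sin 2\theta$ gives exactly the five linear equations
\[
c_2=c_3,\quad d_1=0,\quad e_2=-r^2b_2,\quad e_3=-r^2b_3,\quad f_0=-a_0r^4-\tfrac{c_2+c_3}{2}r^2.
\]
These five equations are manifestly independent, so the Darboux cyclides through $\Gamma$ form an affine $(14-5)=9$-dimensional subspace of coefficient space, i.e.\ a $\PP^8$.

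It remains to check that (\ref{eq:ldarb2}) parametrizes this subspace. Expanding the right-hand side of (\ref{eq:ldarb2}) and matching terms against (\ref{eq:gendarb}) gives $a_0=u_0$, $b_i=u_i$, $c_2=c_3=2u_4-r^2u_0$, $c_1=c_2+2v_1$, $d_1=0$, $d_2=v_3$, $d_3=v_2$, $e_1=v_4-r^2u_1$, $e_2=-r^2u_2$, $e_3=-r^2u_3$, $f_0=u_0r^4-2u_4r^2$. The five constraints above hold identically, and the map $(u_0,\ldots,u_4;v_1,\ldots,v_4)\mapsto(a_0,b_i,c_i,d_i,e_i,f_0)$ is linear and injective (one recovers the $u$'s and $v$'s uniquely from the Darboux coefficients), so by dimension its image fills the constrained subspace. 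No step is a genuine obstacle---the argument is essentially bookkeeping---but the conceptual core is that the real vanishing condition on $\Gamma$ is captured by exactly five Fourier modes, which in turn rests on the irreducibility of $y^2+z^2-r^2$ over $\RR[y,z]$.
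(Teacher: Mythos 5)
Your argument is correct in substance, but it takes a genuinely different route from the paper. The paper argues ideal-theoretically: a polynomial vanishing on $\Gamma$ must lie in the ideal $(x,\;x^2+y^2+z^2-r^2)$ of $\RR(r)[x,y,z]$, and then the quartic, cubic and lower-degree terms are matched to that ideal directly, which produces the form (\ref{eq:ldarb2}) at once. You instead restrict (\ref{eq:gendarb}) to the parametrized circle, observe that the restriction is a trigonometric polynomial with only the modes $1,\cos\theta,\sin\theta,\cos2\theta,\sin2\theta$, extract five independent linear conditions, count $14-5=9$, and then check that the linear, injective map $(u_0,\ldots,u_4;v_1,\ldots,v_4)\mapsto(a_0,\ldots,f_0)$ lands inside the constrained subspace and fills it by dimension. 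What your route buys is that it avoids invoking the (true but unproved-in-the-paper) fact that the real ideal of $\Gamma$ is exactly $(x,\;x^2+y^2+z^2-r^2)$; the vanishing condition is verified by completely elementary Fourier bookkeeping. What it costs is the extra dimension-count step and the explicit coefficient matching, which the paper's construction gets for free. Incidentally, your closing appeal to irreducibility of $y^2+z^2-r^2$ is not what your own argument uses: linear independence of the five trigonometric modes already gives the equivalence you need.

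One computational slip should be fixed: expanding (\ref{eq:ldarb2}) gives $c_2=c_3=2u_4-2r^2u_0$ and $c_1=2u_4-2r^2u_0+2v_1$, not $2u_4-r^2u_0$. With the values as you wrote them, your own constraint $f_0=-a_0r^4-\tfrac{c_2+c_3}{2}r^2$ would fail against $f_0=u_0r^4-2u_4r^2$ unless $u_0=0$; with the corrected coefficients all five constraints do hold identically, and the injectivity and dimension argument goes through unchanged.
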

		
		\noindent{\em Proof.}
		The equation of a Darboux cyclide passing through the circle $\Gamma$ is an algebraic combination of $x$ and $x^2+y^2+z^2-r^2$.  
		Besides, the terms of degree four and degree three should match to the Darboux form (\ref{eq:gendarb}). We expand the quartic and cubic terms to
		\[
		u_0(x^2+y^2+z^2-r^2)^2+2(x^2+y^2+z^2-r^2)(u_1x+u_2y+u_3z),
		\]
		so that they would be contained in the ideal $(x,x^2+y^2+z^2-r^2)$ of the polynomial ring  $\RR(r)[x,y,z]$ over $\RR(r)$ -- the fraction field of $\RR[r]$. 
		The remaining terms of degree $\le 2$ should be in the same ideal, hence they have the shape
		\[
		2u_4(x^2+y^2+z^2-r^2)+ 2x\,\big(v_1x+v_2y+v_3z+v_4\big). \vspace{-6.5mm}
		\]
		\qed\\
		
		The ambient space Darboux cyclides passing through the circle $\Gamma$ is then identified as $\PP^8$, 
		with the coordinates $[u_0,\ldots,u_4;v_1,\ldots,v_4]$.
		The Dupin cyclides with real points will form an algebraic variety $\DD_{\Gamma}$ in this projective space.
		If we would consider the radius $r$ as a variable, the variety $\DD_{\Gamma}$ 
		would be invariant under the scaling of $(x,y,z)\in\RR^3$, and its equations would also be weighted-homogeneous, 
		with the weight 1 for $r$ and the respective weights $0,1,1,1,2,2,2,2,3$ of the coordinates of $\PP^8$.
		We rather consider $r$ as a parameter, and assume $r\neq 0$. 
		
		As it turns out, the variety $\DD_{\Gamma}$ is reducible, 
		reflecting the fact that the circle $\Gamma$ could be either a principal or Villarceau circle on a Dupin cyclide.
		Accordingly, we split the main result into two theorems as follows.
		
		\begin{theorem}\label{th:m3}
			The hypersurface in $\RR^3$ defined by $(\ref{eq:ldarb2})$ is a non-degenerate Dupin cyclide 
			containing $\Gamma$ as a Villarceau circle if and only if the 
			equations 
			\begin{align}  \label{eq:vv2v3}
				v_4-2r^2u_1 & =0, & v_1+2 u_4-2r^2u_0 &=0,\\   \label{eq:vv2v3a}
				u_2 v_2+u_3 v_3 - 2u_1u_4 & =0, & 
				4r^2(u_{1}^{2}+u_{2}^{2} + u_{3}^{2})-4u_4^2-v_2^2-v_3^2 & =0, 
			\end{align}
			and 
			the inequality
			\begin{equation} \label{eq:noneq}
				u_4^2<r^2(u_2^2+u_3^2)
			\end{equation}
			are satisfied.
		\end{theorem}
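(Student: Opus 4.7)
The plan is to translate the setup into the Darboux form (\ref{eq:gendarb}) and then apply Theorem \ref{thm:main}. Expanding the left-hand side of (\ref{eq:ldarb2}) and identifying coefficients yields the explicit dictionary
\begin{align*}
a_0 &= u_0,\quad (b_1,b_2,b_3)=(u_1,u_2,u_3),\quad d_1=0,\quad d_2=v_3,\quad d_3=v_2,\\
c_1 &= 2(u_4+v_1-r^2u_0),\quad c_2=c_3=2(u_4-r^2u_0),\\
e_1 &= v_4-r^2u_1,\quad e_2=-r^2u_2,\quad e_3=-r^2u_3,\quad f_0=r^4u_0-2r^2u_4.
\end{align*}
The special relations $d_1=0$ and $c_2=c_3$ reflect the symmetry of $\Gamma$ about the $x$-axis and considerably shorten the substitutions below.

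In the quartic case $u_0\ne 0$ we rescale to $u_0=1$, apply the shift (\ref{eq:p3shift}) to reduce to the intermediate form (\ref{eq:gendarb1}), and substitute into the twelve polynomials of Theorem \ref{thm:main}. The resulting ideal in $\RR(r)[u_1,\ldots,u_4,v_1,\ldots,v_4]$ is expected to split into at least two prime components, consistent with the reducibility of $\DD_{\Gamma}$ noted above. Using a computer algebra system we compute the primary decomposition and verify that one component is cut out precisely by the four equations (\ref{eq:vv2v3})-(\ref{eq:vv2v3a}). A consistency check: the two linear equations in (\ref{eq:vv2v3}) pin down the coefficients $v_1$ and $v_4$ of the factor $v_1x+v_2y+v_3z+v_4$, while the two quadratic equations in (\ref{eq:vv2v3a}) express orthogonality and length relations characteristic of a bitangent plane. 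The cubic case $u_0=0$ is handled in parallel through Theorem \ref{thm:mainp}; the same four equations should re-emerge.

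To identify which branch of the decomposition corresponds to $\Gamma$ being a Villarceau circle, we exploit the geometric fact that a Villarceau circle lies in a bitangent plane of the cyclide. Restricting (\ref{eq:ldarb2}) to $x=0$ factors as $(y^2+z^2-r^2)\bigl[u_0(y^2+z^2-r^2)+2(u_2y+u_3z+u_4)\bigr]=0$, exhibiting $\Gamma$ together with a secondary planar circle $C'$. The plane $x=0$ is bitangent to the cyclide precisely when $C'$ meets $\Gamma$ in two distinct real points, an intersection condition that, once (\ref{eq:vv2v3})-(\ref{eq:vv2v3a}) are imposed, reduces to the strict inequality (\ref{eq:noneq}). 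This simultaneously excludes the principal-circle branch and the degenerate cyclides listed in Theorems \ref{thm:main} and \ref{thm:mainp}.

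The main obstacle is the size and structure of the polynomial system produced by Theorem \ref{thm:main}: twelve equations of degree up to six in nine variables with the parameter $r$ mixed in. Extracting the correct component will require Gr\"obner-basis computation combined with hand-guided elimination, and the splitting between the Villarceau and the principal branches is not obvious from the individual polynomials. A secondary subtlety is the sufficiency direction: confirming that every projective parameter satisfying (\ref{eq:vv2v3})-(\ref{eq:vv2v3a}) together with (\ref{eq:noneq}) actually yields a smooth Dupin cyclide on which $\Gamma$ is Villarceau. This can be settled either by saturating the ideal away from the degenerate loci of Theorems \ref{thm:main} and \ref{thm:mainp}, or, more constructively, by exhibiting for each such parameter value an explicit M\"obius transformation that carries a standard torus onto the cyclide in question.
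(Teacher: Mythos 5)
Your proposal follows the same backbone as the paper's proof: your coefficient dictionary from (\ref{eq:ldarb2}) into the Darboux form (\ref{eq:gendarb}) is correct, and the plan of rescaling to $u_0=1$, shifting by (\ref{eq:p3shift}), pushing the $12$ polynomials of Theorem \ref{thm:main} (and Theorem \ref{thm:mainp} for $u_0=0$) into $\RR(r)[u_1,\ldots,u_4,v_1,\ldots,v_4]$, and extracting the component cut out by (\ref{eq:vv2v3})--(\ref{eq:vv2v3a}) is exactly what Section \ref{sec:proofs} does. The differences are in how the decomposition is organized and how the Villarceau branch is recognized. The paper does not run a black-box primary decomposition: it factors $\rho(K_1)=-\frac14 T_4V_0$ and proceeds by the hand-guided case split $T_4\neq0$, $V_0\neq 0$, $T_4=V_0=0$, discarding non-real branches via sums of squares (e.g.\ $v_4^2+r^2V_1^2$) and recognizing the degenerate branches through the matrix ${\cal L}$ of Remark \ref{rem:skipPR}; this guided elimination is what makes the computation tractable, and a direct primary decomposition of the $12$-equation ideal over $\RR(r)$ may be considerably more expensive than you anticipate. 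To decide which component carries $\Gamma$ as a Villarceau circle, the paper uses M\"obius invariance of the circle type together with the representative tori (\ref{eq:uprinc}) and (\ref{eq:uvillar}), whereas you restrict to $x=0$, factor off $\Gamma$ and the residual circle $C'$, and invoke bitangency of the plane; your route also produces (\ref{eq:noneq}) as the condition that $C'$ meets $\Gamma$ in two distinct real points, while the paper obtains $u_4^2\leq r^2(u_2^2+u_3^2)$ from real solvability of $v_2$ in (\ref{eq:vv2v3a}) and then sharpens to the strict inequality by excluding horn cyclides (Remark \ref{rm:skipVL}). Both identifications are sound; yours is more self-contained geometrically, the paper's settles the question once and for all on tori and transports it by M\"obius transformations.

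Two cautions. First, the decisive computational facts in your plan (that one component is precisely the complete intersection of the four equations, and that the strict inequality eliminates all residual degenerations inside this component, e.g.\ the sphere with a point on $\Gamma$) are announced as ``verify by CAS'' rather than exhibited; this is acceptable in spirit, since the paper's argument is equally computational, but the verification is the actual content. Second, for the sufficiency direction you should lean on the converse contained in \cite{MV} (the $12$ equations characterize Dupin cyclides up to an explicit list of degenerations), which is how the paper argues; your alternative of producing an explicit M\"obius transformation from a torus for every parameter value would be substantially more work than the problem requires.
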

		
		\begin{theorem}\label{th:m3p}
			The hypersurface in $\RR^3$ defined by $(\ref{eq:ldarb2})$ is a non-degenerate Dupin cyclide 
			containing $\Gamma$ as a principal circle 
			only if the ranks of the following two matrices are equal to $1$:
			\begin{align}  \label{eq:mmg}
				{\cal N} = & \left( \begin{array}{cc} 
					u_2 & v_2\\
					u_3 & v_3\\
					u_4 & v_4
				\end{array} \right), \\
				{\cal M} = & \left( \begin{array}{cc} 
					u_2 & v_2(v_4-2r^2u_1)\\
					u_3 & v_3(v_4-2r^2u_1)\\
					u_4 & v_4(v_4-2r^2u_1)\\
					2u_0 & v_2^2+v_3^2-4r^2u_1^2\\
					u_1 & 4r^2u_0v_4-2r^2(u_2v_2+u_3v_3)-4r^2u_1(v_1+u_4)\\
					v_1 & 4r^4(u_2^2+u_3^2+2u_0v_1)-4r^2(v_1+u_4)^2-(v_4-2r^2u_1)^2 \! \\
					v_2 & -8r^4u_1u_2-4r^2v_2(v_1+u_4-2r^2u_0)\\
					v_3 & -8r^4u_1u_3-4r^2v_3(v_1+u_4-2r^2u_0)\\
					v_4 & -8r^4u_1u_4-4r^2v_4(v_1+u_4-2r^2u_0)
				\end{array} \right) \!.
			\end{align}
		\end{theorem}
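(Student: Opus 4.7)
\noindent\emph{Proof plan.} The strategy is to combine a geometric characterization of principal circles with the algebraic Dupin conditions of Theorems \ref{thm:main} and \ref{thm:mainp}, splitting the necessary conditions into a tangent-sphere relation that produces $\mathrm{rank}\,\mathcal{N}=1$ and the residual Dupin content that produces $\mathrm{rank}\,\mathcal{M}=1$.

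\textbf{Step 1 (tangent sphere).} A principal circle on a Dupin cyclide is the tangency curve of a unique sphere (or, as a limit, plane) through it that is tangent to the cyclide along it. The spheres through $\Gamma$ form the pencil $x^2+y^2+z^2-r^2=2\alpha x$, with the plane $x=0$ as the limit $\alpha\to\infty$. Writing $F$ for the left-hand side of (\ref{eq:ldarb2}), direct computation (only the last two summands of $F$ contribute, since $x^2+y^2+z^2-r^2$ and $x$ both vanish on $\Gamma$) gives
\[
\nabla F\big|_{(0,y_0,z_0)}=\bigl(2(v_2y_0+v_3z_0+v_4),\;4y_0P,\;4z_0P\bigr),\quad P:=u_2y_0+u_3z_0+u_4.
\]
Requiring $\nabla F$ to be parallel to the sphere normal $(-\alpha,y_0,z_0)$ for every $(y_0,z_0)$ on $\Gamma$ forces $v_i=-2\alpha\,u_i$ for $i=2,3,4$, which is exactly the rank-$1$ condition on $\mathcal{N}$.

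\textbf{Step 2 (Dupin conditions yield $\mathcal{M}$).} I would expand (\ref{eq:ldarb2}) into the standard Darboux form (\ref{eq:gendarb}), extract the identifications $a_0=u_0$, $(b_1,b_2,b_3)=(u_1,u_2,u_3)$, $c_1=2v_1+2u_4-2r^2u_0$, $c_2=c_3=2u_4-2r^2u_0$, $d_1=0$, $d_2=v_3$, $d_3=v_2$, $(e_1,e_2,e_3)=(v_4-r^2u_1,-r^2u_2,-r^2u_3)$, $f_0=u_0r^4-2r^2u_4$, and substitute into the twelve Dupin polynomials of Theorem \ref{thm:main} (after the shift (\ref{eq:p3shift}) if $u_0\neq 0$), or into Theorem \ref{thm:mainp} if $u_0=0$. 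Imposing the Step 1 proportionality $v_i=\mu u_i$ (with $\mu=-2\alpha$) and setting aside the Villarceau branch already treated in Theorem \ref{th:m3}, the surviving relations express exactly that all $2\times 2$ minors of $\mathcal{M}$ vanish: the first three rows fix the common proportionality factor $\kappa=\mu(\mu u_4-2r^2u_1)$, and rows $4$--$9$ -- whose second-column entries are engineered for this purpose -- each reproduce one Dupin identity when compared with the top block. The non-degeneracy assumption is invoked at the end to discard the extraneous components (touching sphere pairs, isolated points, etc.) listed in Theorems \ref{thm:main} and \ref{thm:mainp}.

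\textbf{Main obstacle.} The hardest part is the symbolic algebra in Step 2: after substitution the Dupin polynomials reach degree $8$ in the $u_i,v_i,r$, and establishing that the resulting ideal coincides with the rank-$1$ minor ideal of $\mathcal{M}$ is essentially a Gr\"obner-basis computation. I would organize the write-up row by row through $\mathcal{M}$, eliminating $v_2,v_3,v_4$ at the outset via Step 1, and would treat carefully the borderline branches $u_2=u_3=u_4=0$ (tangency to the plane $x=0$) and $v_4=2r^2u_1$ (interface with the Villarceau case of Theorem \ref{th:m3}), where the constant $\mu$ must be interpreted projectively.
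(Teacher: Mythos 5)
Your global architecture coincides with the paper's: identify the coefficients of (\ref{eq:ldarb2}) with those of (\ref{eq:gendarb}) (your dictionary $c_1=2v_1+2u_4-2r^2u_0$, $d_2=v_3$, $d_3=v_2$, $e_1=v_4-r^2u_1$, $f_0=u_0r^4-2r^2u_4$ is correct), push the twelve polynomials of Theorem \ref{thm:main} (resp.\ the conditions of Theorem \ref{thm:mainp} when $u_0=0$) through this identification, and analyse the resulting ideal. Your Step 1 is a genuinely different route to ${\rm rank}\,{\cal N}=1$: you import the canal-surface fact that a Dupin cyclide is tangent to a single sphere of the pencil through $\Gamma$ along a principal circle, and your gradient computation correctly converts this into $v_i=-2\alpha u_i$. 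The paper instead obtains the minors $T_2,T_3,T_4$ of ${\cal N}$ purely algebraically, via the factorization $\rho(K_1)=-\frac14T_4V_0$ and polynomial multiples of the sum of squares $T_2^2+T_3^2$ found in $(\DI_{\Gamma},T_4,V_0)$. Your shortcut is legitimate if you cite the tangent-sphere property (it is in substance Lemma \ref{th:F12const} together with \cite{Kra}), and it buys a cleaner first half at the price of an external geometric input that the paper deliberately avoids.

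The gap is in Step 2. The assertion that, after imposing $v_i=\mu u_i$ and ``setting aside the Villarceau branch,'' the surviving relations ``express exactly that all $2\times2$ minors of ${\cal M}$ vanish'' is the entire content of the theorem, and your plan supplies no mechanism forcing it. Concretely, the locus ${\rm rank}\,{\cal N}\le 1$ inside the variety of the Dupin conditions is still reducible: the paper must further localize at the auxiliary polynomial $V_4$ (whose non-vanishing branch consists of touching spheres, i.e.\ degenerate cyclides to be discarded, cf.\ Remark \ref{rem:skipPR}), then eliminate $v_2,v_3,v_4$ and split on $V_5=u_1^2(u_2^2+u_3^2)+(u_2^2+u_3^2-2u_4)^2$, whose real vanishing locus itself breaks into a double-sphere branch and pieces that fold back into the principal component; an analogous chain ($V_1^*$, $V_2^*$) is needed in the cubic case. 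Only after these discards does the residual ideal coincide with the minor ideal of ${\cal N}$ and ${\cal M}$. Your row-by-row picture --- ``rows $4$--$9$ each reproduce one Dupin identity when compared with the top block'' --- does not survive this: the second-column entries of ${\cal M}$ are not individually images of the $K,L,M,N$ polynomials but emerge only after the eliminations and localizations above. So while you correctly name the computation to be done and honestly flag it as the hard part, the plan omits the branch analysis that makes the conclusion true precisely for the non-degenerate principal-circle points, and without it the identification of the residual ideal with the minors of ${\cal M}$ remains unverified.
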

		\begin{remark} \rm
			The rank conditions mean vanishing of the $2\times 2$ minors of the matrices ${\cal N}$ and ${\cal M}$.
			The $2\times 2$ minors from the first 3 rows of ${\cal M}$ differ from the
			minors of ${\cal N}$ by the common factor $v_4-2r^2u_1$. Incidentally, this factor appears as an equation for the Villarceau case.
			Localizing with $(v_4-2r^2u_1)^{-1}$ leads to the ideal for the principal circle case. 
			But the Villarceau case equations of Theorem \ref{th:m3} do not imply a lesser rank of ${\cal M}$,
			as the second column does not necessarily vanish fully, particularly 
			in the fourth row. Rather similarly, the $2\times 2$ minors from the last 3 rows differ of ${\cal M}$ from the 
			minors of ${\cal N}$ by the common factor $-8r^4u_1$, as the terms $-4r^2v_i(v_1+u_4-2r^2u_0)$ are proportional to the first column.
			Therefore, the $2\times 2$ minors formed only by the first 3 rows or only by the last 3 rows of ${\cal M}$ can be ignored.
		\end{remark}

		\begin{remark} \rm 
			Using {\sf Maple} or {\sf Singular}, the Hilbert series of the ideal principal circle component is given by
			$H_{p}(t)/(1-t)^4$, where
			\begin{equation}
				H_{p}(t) = 1+4t+7t^2-10t^3+10t^4-5t^5+t^6.
			\end{equation}
			Hence, the dimension of the variety equals 4 and the degree equals $H_{p}(1)=8$.
			The Hilbert series of the ideal of Villarceau circle component is computed as $H_{v}/(1-t)^4$, where
			\begin{equation}
				H_{v}(t)=1+2t+t^2.
			\end{equation}
			It follows that the dimension of the variety equals $4$ and the degree equals $4$.
			The Zariski closure of the Villarceau circle component is a complete intersection.
		\end{remark}
		
		\section{Distinguishing principal and Villarceau circles}
		\label{sec:split}
		
		As we will discuss in Section \ref{sec:proofs}, the variety $\DD_{\Gamma}$ turns out to be reducible.
		We discard some of the components because they either represent only degenerate reducible cases
		of cyclides, or the cases with complex (rather than real) coefficients in (\ref{eq:ldarb2}). 
		Theorems \ref{th:m3} and \ref{th:m3p} describe two components of $\DD_{\Gamma}$ 
		that have real points representing
		non-degenerate Dupin cyclides. 
		They are distinguished by the homotopy class of $\Gamma$ 
		as either a principal circle  or a Villarceau circle. 
		These two homotopical types can be distinguished by inspecting the type of $\Gamma$
		on representative 
		torus surfaces. 
		Indeed, principal circles are preserved by \cite[Theorem 3.14]{Ottens} by M\"obius transformations (finite composition of inversions). Hence, the components of $\DD_{\Gamma}$ 
		are invariant under M\"obius transformations fixing the circle $\Gamma$.
		The real components of $\DD_{\Gamma}$ contain subvarieties that correspond to toruses
		as representative Dupin cyclides under the continuous action of M\"obius transformations.
		
		Under Euclidean transformations, we move the torus equation (\ref{eq:torus}) so that the circle $\Gamma$ is a principal circle (with radius $r$) or a Villarceau circle \mbox{(with radius $R$)}.
		The principal circles on the vertical plane $x=0$ are given by $(y\pm R)^2+z^2=r^2$. 
		Identifying one of those circles with $\Gamma$ by the shift $y\mapsto y+R$,
		we obtain an equation of the form (\ref{eq:ldarb2})
		with
		\begin{align} \label{eq:uprinc}
			[u_0,u_1,u_2,u_3,u_4;v_1,v_2,v_3,v_4]  =  [1,0,-2R,0,2R^2;-2R^2,0,0,0]
		\end{align}
		for the representative (under the M\"obius transformations) tori with $\Gamma$ as a principal circle.
		It is straightforward to check that the representative tori (\ref{eq:uprinc}) 
		do not satisfy the second the fourth equations of Theorem \ref{th:m3} generically,
		while the second column of the matrix ${\cal M}$ in Theorem \ref{th:m3p} consists of zeroes for them.
		
		Now consider
		a Villarceau circle on $z=\alpha x + \beta y$ where $\alpha=r/\varrho,\beta=0$, $\varrho = \sqrt{R^2-r^2}$ 
		is moved onto $\Gamma$ 
		by the Euclidean transformation
		\begin{equation}
			(x,y,z)\mapsto \left(\frac{rx+\varrho z}{R},r-y,\frac{rz-\varrho x}{R}\right).
		\end{equation}
		Then the torus equation becomes
		\begin{equation} \label{eq:vtorus}
			\big(x^2+y^2+z^2-2ry+R^2\big)^2-4 \big( (rx+\varrho z)^2+R^2(y-r)^2\big)=0.
		\end{equation}
		This identifies (\ref{eq:ldarb2}) with
		\begin{align} \label{eq:uvillar}
			[u_0,u_1,u_2,u_3,u_4;v_1,v_2,v_3,v_4] =  [1,0,-2r,0,2r^2;2R^2-4r^2,0,-4r\varrho,0] 
		\end{align}
		as an implicit equation for the representative tori with $\Gamma$ as a Villarceau circle.
		The representative tori (\ref{eq:uvillar}) satisfy the equations of Theorem \ref{th:m3},
		while the rows with $u_2$ and $u_0$ in the first column form a lower-triangular matrix
		with non-zero determinant generically. Therefore, Theorem \ref{th:m3} describes cases
		with $\Gamma$ as a Villarceau circle, and Theorem \ref{th:m3p} describes cases
		with $\Gamma$ as a principal circle, as claimed. 
		
		\begin{remark}\rm
			\label{rem:skipPR}
			The variety $\DD_{\Gamma}$ contains a non-interesting big component consisting of touching spheres and touching sphere/plane cases.
			This component is defined by the $2\times 2$ minors of the matrix
			\begin{align}  \label{eq:mmg1}
				{\cal L} = & \left( \begin{array}{cc} 
					u_2 & v_2\\
					u_3 & v_3\\
					u_4 & v_4\\
					u_0v_2 & 2(u_1v_2-u_2v_1)\\
					u_0v_3 &  2(u_1v_3-u_3v_1)\\
					u_0v_4 &  2(u_1v_4-u_4v_1)\\
				\end{array} \right),
			\end{align}
			and the equation
			\begin{align}
				\label{eq:touch-cond2}
				4r^2(u_1^2+u_2^2+u_3^2)+v_2^2+v_3^2-8v_1(r^2u_0-u_4)-4v_4u_1-4u_4^2 = 0.
			\end{align}
			%
			In addition, the degeneration to the circle $\Gamma$ satisfy ${\rm rank}({\cal L})=0$ and with the two more equations: $u_1=0$ and $v_1=2r^2u_0$. 
			Furthermore,  the principal component restricted to  ${\rm rank}({\cal L})=0$ only has double sphere cases.
			Therefore, the matrix ${\cal L}$ has to be of full rank $2$ in order to obtain a non-degenerate Dupin cyclide in the principal circle component.
		\end{remark}
		
		\begin{remark}\rm
			\label{rm:skipVL}
			In the Villaceau circle component, we must have \mbox{$u_4^2\leq r^2(u_2^2+u_3^2)$}. 
			Otherwise, there is no real solution for $v_2$ in (\ref{eq:vv2v3a}). 
			The strict inequality $(\ref{eq:noneq})$ throws away horn cyclides ($J_0=0$ with reference to Section \ref{sec:invariant}) among Dupin cyclides defined by the main equations $(\ref{eq:vv2v3})-(\ref{eq:vv2v3a})$. Those horn Dupin cyclides are also in the principal circle component. Note that the intersection between the touching spheres component and the principal circle component represents touching spheres with touching point on the circle $\Gamma$. The touching spheres component intersects the Villarceau component at a sphere containing $\Gamma$ and a point on $\Gamma$. The latter intersection is contained in the principal component.
		\end{remark}
		
		\section{Proving Theorems $\ref{th:m3}$ and $\ref{th:m3p}$}
		\label{sec:proofs}
		
		Let us define 
		the ring 
		\begin{equation}
			\DR_{\Gamma} = \RR(r)[u_1,u_2,u_3,u_4,v_1,v_2,v_3,v_4],
		\end{equation}
		and let us denote the $2\times 2$ minors ${\cal N}$ by
		\begin{align}
			T_2= &\, u_3v_4-u_4v_3, \\ 
			T_3= &\, u_2v_4-u_4v_2, \\  
			T_4= &\, u_2v_3-u_3v_2.
		\end{align}
		Let us also denote
		\begin{align}
			U_0=u_1^2+u_2^2+u_3^2.
		\end{align}
		
		We split the proofs into two cases for quartic and cubic Dupin cyclides by the use of Theorems \ref{thm:main} and \ref{thm:mainp} in a parallel way.
		We arrive at parallel options to simplify the reducible variety $\DD_{\Gamma}$ from the full consideration of equations in those theorems.
		Most of the considered particular equations or factors appear naturally by considering the mentioned eliminations and localizations.
		
		\subsection{Proof for quartic cyclides}
		
		Without loss of generality, we may assume $u_0=1$ while considering quartic cyclides. 
		To apply Theorem \ref{thm:main}, it is necessary to apply the shift (\ref{eq:p3shift})
		with $(b_1,b_2,b_3)=(u_1,u_2,u_3)$,
		so to bring the cyclide equation (\ref{eq:ldarb2}) to the form (\ref{eq:gendarb1}).
		The obtained expression is
		\begin{align}\label{eq:translatedFC}
			& \left(x^2+y^2+z^2\right)^2
			+\left(2(u_4 +v_1-r^2)-u_1^2-\frac{U_0}{2}\right)x^2\nonumber\\
			& + \left(2(u_4-r^2)-u_2^2-\frac{U_0}{2}\right)y^2 
			+ \left(2(u_4-r^2)-u_3^2-\frac{U_0}{2}\right)z^2\nonumber\\
			& - 2 u_2u_3 yz + 2(v_3 - u_1 u_3)xz + 2(v_2 - u_1 u_2)xy \\
			& - \left(2u_1v_1+u_2v_2+u_3v_3-2v_4-u_1(U_0-2u_4)\right) x\nonumber\\
			&  - \left(u_1v_2-u_2(U_0-2u_4)\right)y 
			- \left(u_1v_3-u_3(U_0-2u_4)\right)z\nonumber\\
			& - \frac{3U_0^2}{16} + \frac{U_0(u_4+r^2)+u_1(u_1v_1+u_2v_2+u_3v_3-2v_4)}{2} - 2r^2u_4 + r^4 = 0.\nonumber
		\end{align}
		Identification with the coefficients $c_1,c_2,\ldots,f_0$ in (\ref{eq:gendarb1}) defines 
		the ring homomorphism
		\[
		\rho:\RR[c_1,c_2,c_3,d_1,d_2,d_3,e_1,e_2,e_3,f_0]\rightarrow \DR_{\Gamma}. 
		\]
		Let $\DI_{\Gamma}\subset \DR_{\Gamma}$ 
		denote the ideal generated by the  $\rho$ \!\!-images of the 12 polynomials in Theorem \ref{thm:main}.
		The polynomials in this ideal have to vanish when (\ref{eq:ldarb2}) is a Dupin cyclide.
		Our goal is to describe real points representing non-degenerate Dupin cyclides on this variety. 
		
		The polynomial 
		$\rho(K_1)$ factors in $\DR_{\Gamma}$, namely $\rho(K_1)=-\frac14T_4V_0$, where
		\begin{align*}
			V_0= &\; u_1^2\,(2u_1u_4\!-\!u_2v_2\!-\!u_3v_3)\!+\!(u_2^2\!+\!u_3^2\!-\!2u_4)(2u_1u_4\!+\!2u_1v_1\!+\!u_2v_2\!+\!u_3v_3\!-\!2v_4).
		\end{align*}
		This shows that the variety defined by $\DI_{\Gamma}$ is reducible. 
		To investigate real points of the variety, we consider the possible three options: $T_4\neq0$; $V_0\neq 0$; and  $T_4=V_0=0$.
		
		Assume that $T_4\neq0$.  Elimination of
		$v_2,v_3,v_4$ 
		gives the product $V_1V_2\in\DI_{\Gamma}$ in the remaining variables, where 
		\begin{align}
			V_1 =  \; v_1+2u_4-2r^2, \quad V_2 = \; (u_1^2+u_2^2+u_3^2-2u_4)^2+4r^2u_1^2.
		\end{align}
		If $V_2=0$, then $U_0-2u_4=0$, $u_1=0$ as we look only for real components.
		The augmented ideal contains this sum of squares: \mbox{$v_4^2+r^2V_1^2=0$.}
		Therefore, $V_1=0$ is inevitable for the real components with $T_4\neq 0$. 
		The ideal \mbox{$\DI_{\Gamma}+(V_1)$} in $\DR_{\Gamma}[T_4^{-1}]$ contains several
		multiples of the polynomial $V_3=v_4-2r^2u_1$. 
		Localizing $V_3\neq 0$ gives the trivial ideal of $\DR_{\Gamma}[T_4^{-1},V^{-1}_3]$, hence an empty variety.
		With $V_3=0$ we obtain the equations of Theorem \ref{th:m3} in the homogenized form with $u_0$.
		The points on the corresponding variety describe cases when $\Gamma$ is a Villarceau circle, as analyzed in the previous section. 
			
			Secondly, assume that $V_0\neq 0$. Localization of $\DI_{\Gamma}$ in the ring $\DR_{\Gamma}[V_0^{-1}]$ gives an ideal 
			generated by the $2\times 2$ minors of the matrix ${\cal L}$ in (\ref{eq:mmg1})
			and the additional equation (\ref{eq:touch-cond2}) with $u_0=1$. Hence we only obtain degenerate Dupin cyclides according to Remark \ref{rem:skipPR}.
			
			The last option is $T_4 = V_0 = 0$. We notice polynomial multiples of $T_2^2+T_3^2$ in the Gr\"obner basis of $(\DI_{\Gamma},T_4,V_0)$.
			Localization at $T_2^2+T_3^2\neq 0$ gives an ideal that contains the 4 polynomials of Theorem \ref{th:m3}. 
			Hence, it describes some points in the Villarceau circle component (of the option $T_4\neq0$). 
			We assume further that $T_2=T_3=0$.  
			Consideration of the following polynomial allows further progress: 
			\begin{align}
				V_4 = &\;(2r^2u_1+v_4)(U_0-2u_4-2v_1)-u_1(4r^2u_4+v_2^2+v_3^2)\nonumber\\
				&\;+(v_1-4r^2)(u_2v_2+u_3v_3)+8r^2v_4.
			\end{align}
			The localization $V_4\neq 0$ leads to a subcase (describing touching spheres) of the option $V_0\neq0$. 
			Hence, we assume that $V_4=0$.
			Elimination of  
			$v_2,v_3,v_4$ in the ideal  $(\DI_{\Gamma},T_2,T_3,T_4,V_0,V_4)$
			leads to  some generators that factor with 
			\begin{equation}
				V_5=u_1^2(u_2^2+u_3^2)+(u_2^2+u_3^2-2u_4)^2.
			\end{equation}
			The further localization $V_5\neq0$ leads to the principal circle component in Theorem \ref{th:m3p}.
			The remaining case $V_5=0$ splits into these two subcases, as we are interested in the real points only:
			\begin{enumerate}
				\item[\refpart{i}] $u_1\neq 0$, so that $u_2=u_3=u_4=0$. 
				The obtained ideal is reducible, with the prominent factor $V_6=u_1^2(v_2^2+v_3^2)+4v_4^2$
				after elimination of $v_1$. The localization $V_7\neq0$ belongs to the principal circle component. 
				The case $V_6=0$ simplifies to $v_2=v_3=v_4=2v_1-u_1^2=0$, and the cyclide degenerates to a double sphere case. 
				\item[\refpart{ii}] $u_1=0$, $u_2^2+u_3^2-2u_4=0$. Elimination of the variables $u_1$, $u_2$, $u_3$, $u_4$ gives us a principal ideal,
				and the generator factors with 
				\begin{equation}
					V_7=(v_2^2+v_3^2)^3+(v_1v_2^2+v_1v_3^2+2v_4^2)^2. 
				\end{equation}
				The localization $V_7\neq0$ belongs to the principal circle component. 
				With $V_7=0$ we get $v_2=v_3=v_4=0$, and the resulting ideal contains the product $(u_2^2+u_3^2+2v_1)^2(u_2^2+u_3^2+2v_1-4r^2)$. 
				Either of the factors leads to points on the principal circle component. 
			\end{enumerate}
			
			\subsection{Proof for cubic cyclides}
			
			We use Theorem  \ref{thm:mainp} to recognize cubic Dupin cyclides in the form  (\ref{eq:ldarb2}) with $u_0=0$. The equation is first transformed to the form (\ref{eq:gendarb})
			\begin{align*}
				&2(u_1x+u_2y+u_3z)(x^2+y^2+z^2)\\ 
				&+ (2u_4+2v_1)x^2+2u_4y^2+2u_4z^2+2v_2xy+2v_3xz\\
				&+2(v_4-r^2u_1)x-2r^2u_2y-2r^2u_3z-2r^2u_4=0.
			\end{align*}
			Let 
			\[
			\rho_0:\RR[b_1,b_2,b_3,c_1,c_2,c_3,d_1,d_2,d_3,e_1,e_2,e_3,f_0]\rightarrow \DR_{\Gamma}. 
			\]
			be the ring homomorphism defined by the coefficients identification. 
			Since $\rho_0(B_0)=U_0$, all remaining computations will be considered over the localized ring $\DR_{\Gamma}[U_0^{-1}]$. 
			Let us denote by $\DI_{\Gamma}^*$ the ideal generated by the numerators of the $\rho_0$-images of the 4 equations in Theorem \ref{thm:mainp}. 
			This ideal contains the product $T_4V^*_0$, where
			\begin{align}
				V^*_0=2u_1u_4U_0+2u_1v_1(u_2^2+u_3^2)+(u_2v_2+u_3v_3)(u_2^2+u_3^2-u_1^2).
			\end{align}
			Similar to the simplification method in the quartic case, we consider the three options $T_4\neq 0$;  $V_0^*\neq 0$; and  $T_4=V_0^*=0$.
			
			The localization $T_4\neq 0$ gives us directly the $u_0=0$ part of the Villarceau circle component in Theorem \ref{th:m3}.
			
			The localizing $V_0^*\neq 0$ gives an ideal containing the $2\times 2$ minors of the matrix ${\cal L}$. Hence, this case describes only degenerate cyclides, see Remark \ref{rem:skipPR}. 
			
			With $T_4=V_0^*=0$, the ideal $(\DI_{\Gamma}^*,T_4,V_0^*)$ contains the sum of squares $T_2^2+T_3^2$. 
			Hence $T_2=T_3=0$ since we are looking for real points of the variety $\DD_{\Gamma}$ only.
			The further candidate for localization to consider is
			\[V_1^* = 4r^2u_1^2+v_2^2+v_3^2-4u_1v_4.\]
			By comparing Gr\"oebner bases, the localization  of $(\DI_{\Gamma}^*,T_2,T_3,T_4,V_0^*)$ at $V_1^*\neq 0$ indeed coincides with the ideal of principal circle defined by the $2\times 2$ minors of ${\cal N}$ and ${\cal M}$. 
			The remaining case $V_1^*=0$ can be localized further at $V_2^*=u_2^2+u_3^2+u_4^2$. 
			The localization $V_2^*\neq 0$ defines points on the principal circle component. 
			The case $V_2^*=0$ simplifies to $u_2=u_3=u_4=0$ and the cyclide equation degenerates to a subcase of touching sphere/plane case.
			
			\section{Smooth blending of cyclides}
			\label{sec:join}
			
			Here we apply the main results to the practical problem of blending smoothly two Dupin cyclides along a common circle.
			Smooth blending in this context means that the cyclides share tangent planes along their common circle. 
			
			\begin{lemma}
				\label{th:joincond}
				Consider two cyclides equations of the form $(\ref{eq:ldarb2})$ with possibly different coefficients $[u_0,\ldots,u_4,v_1,\ldots,v_4]$. Then they are joined smoothly along the circle $\Gamma$ if and only if
				the rational function 
				\begin{equation} \label{eq:g1join}
					\FG(y,z)=\frac{v_2y+v_3z+v_4}{u_2y+u_3z+u_4}
				\end{equation}
				is the same function on the circle $\Gamma$ for both cyclides.
			\end{lemma}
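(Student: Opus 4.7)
The plan is to reduce smooth blending along $\Gamma$ to the equality, up to a scalar, of the two gradients at every point of $\Gamma$, and then read off the stated condition on $\FG$. Write $\rho = x^2+y^2+z^2-r^2$, so a cyclide in the form $(\ref{eq:ldarb2})$ has defining polynomial
\[
F = u_0\rho^2+2\rho(u_1x+u_2y+u_3z+u_4)+2x(v_1x+v_2y+v_3z+v_4).
\]
At any point of $\Gamma$ both $x$ and $\rho$ vanish, so in each partial derivative only the terms obtained by differentiating $\rho$ in the middle summand, and by differentiating $x$ in the last summand, survive. A short calculation then gives
\[
\nabla F|_{\Gamma} = \bigl(\,2A,\; 4yB,\; 4zB\,\bigr),
\]
where $A = v_2y+v_3z+v_4$ and $B = u_2y+u_3z+u_4$. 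In particular, smoothness of the cyclide along $\Gamma$ is equivalent to $(A,B)\neq(0,0)$ at each point of $\Gamma$.

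For two cyclides with data $(A_i,B_i)$, $i=1,2$, their tangent planes at $(0,y,z)\in\Gamma$ coincide iff the normal vectors $(A_i,2yB_i,2zB_i)$ are parallel, i.e.\ iff the cross product vanishes. The $x$-component of this cross product is identically zero, while the remaining two components are proportional to $z(A_1B_2-A_2B_1)$ and $y(A_1B_2-A_2B_1)$. Since $y^2+z^2=r^2>0$ on $\Gamma$, at every point at least one of $y,z$ is nonzero, so the parallelism condition collapses to the single polynomial identity $A_1B_2 = A_2B_1$ on $\Gamma$. This is precisely the statement that $\FG_1 = A_1/B_1$ and $\FG_2 = A_2/B_2$ agree as rational functions on the circle.

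The one delicate point, and the place where care is needed, is the treatment of points of $\Gamma$ where $B$ vanishes: smoothness forces $A\neq 0$ there, so $\FG$ takes the value $\infty$, and tangent plane matching requires the other cyclide to have $B=0$ at the same point as well. The polynomial formulation $A_1B_2\equiv A_2B_1$ on $\Gamma$ handles such points uniformly, which is the natural interpretation of $(\ref{eq:g1join})$ as a well-defined rational function on $\Gamma$.
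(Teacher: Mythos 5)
Your proof is correct and follows essentially the same route as the paper: compute the gradient of the defining polynomial at points of $\Gamma$ (where $x=\rho=0$), obtaining a normal proportional to $\bigl(v_2y+v_3z+v_4,\;2y(u_2y+u_3z+u_4),\;2z(u_2y+u_3z+u_4)\bigr)$, and translate coincidence of tangent planes into proportionality of these normals, i.e.\ equality of $\FG$ on $\Gamma$. Your explicit handling of points where the denominator $u_2y+u_3z+u_4$ vanishes (via the cross-multiplied identity) is slightly more careful than the paper's division-based rescaling, but it is the same argument.
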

			\noindent{\em Proof.} 
			The normal vector of cyclides  (\ref{eq:ldarb2}) along the circle $\Gamma$ is defined by 
			the gradient of the defining polynomial. The gradient is computed as
			\[
			\big( v_2 y + v_3 z + v_4, 2y (u_2y+u_3z+u_4), 2z (u_2y+u_3z+u_4) \big).
			\]
			On the two given cyclides, the paired gradient vectors should be proportional along the circle 
			in order to obtain smooth blending.  After the division by $u_2y+u_3z+u_4$, 
			the gradient vectors are rescaled to 
			$\big(\FG(y,z), 2y, 2z\big)$
			for direct comparison.
			\qed \\
			
			A special case is when the rational function (\ref{eq:g1join}) is a constant on $\Gamma$. This is equivalent to ${\rm rank}({\cal N})=1$.
			Therefore, the rational function $\FG$ is constant when $\Gamma$ is a principal circle case of a Dupin cyclide. 
			As the following lemma implies, the envelope surface of tangent planes of any cyclide equation satisfying  ${\rm rank}({\cal N})=1$ along $\Gamma$ is a circular cone or cylinder.
			It is known \cite{Kra} that the envelope appearing as a cone or cylinder occurs in the case of Dupin cyclides if the circle is principal. 
			This is due to the representation of Dupin cyclides as canal surfaces, where they are considered as conics in 4-dimensional Minkowski space and the tangent lines to those conics represent circular cones or cylinders; see \cite{Kra} for details. 
			\begin{lemma}
				\label{th:F12const}
				The function $\FG(y,z)\equiv \cc$ on the circle $\Gamma$ for some constant $\cc$ 
				if and only if the envelope surface of tangent planes of the cyclide $(\ref{eq:ldarb2})$ along $\Gamma$ is 
				given by the equation
				\begin{align}
					\label{eq:coneFamily}
					y^2+z^2=\left(r-\frac{\cc x}{2r}\right)^2.
				\end{align}
				It is a circular cone if $\cc\neq 0$, or a cylinder if $\cc=0$. 
			\end{lemma}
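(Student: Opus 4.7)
The plan is to compute the envelope of the one-parameter family of tangent planes along $\Gamma$ explicitly, using the gradient formula derived in the proof of Lemma \ref{th:joincond}, and to observe that the shape of the envelope pins down $\FG$ along $\Gamma$.

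First I would parametrize $\Gamma$ by $(y_0,z_0)=(r\cos\theta,r\sin\theta)$ and write the tangent plane to the cyclide (\ref{eq:ldarb2}) at the point $(0,y_0,z_0)\in\Gamma$ using the rescaled gradient $(\FG(y_0,z_0),2y_0,2z_0)$ from Lemma \ref{th:joincond}. Substituting the parametrization, the plane equation becomes
\begin{equation*}
\FG(y_0,z_0)\,x+2r\cos\theta\cdot y+2r\sin\theta\cdot z=2r^2.
\end{equation*}

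For the forward direction, assume $\FG(y_0,z_0)\equiv\cc$ on $\Gamma$. Then the family depends only on $\theta$, and I would form the envelope by solving the system consisting of the plane equation together with its $\theta$-derivative $-\sin\theta\cdot y+\cos\theta\cdot z=0$. The derivative gives $\cos\theta=y/\sqrt{y^2+z^2}$, $\sin\theta=z/\sqrt{y^2+z^2}$ (up to sign), and substituting back collapses the first two trigonometric terms to $2r\sqrt{y^2+z^2}$. Rearranging yields $\sqrt{y^2+z^2}=r-\cc x/(2r)$, and squaring gives exactly (\ref{eq:coneFamily}). Inspecting this equation, it is a circular cone with apex at $(2r^2/\cc,0,0)$ when $\cc\neq 0$, and the cylinder $y^2+z^2=r^2$ when $\cc=0$.

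For the converse, assume the envelope surface along $\Gamma$ is given by (\ref{eq:coneFamily}). The gradient of $y^2+z^2-(r-\cc x/(2r))^2$ at a point $(0,y_0,z_0)\in\Gamma$ is $(\cc,2y_0,2z_0)$, so this is the normal to the envelope along $\Gamma$. Since the envelope shares its tangent planes with the cyclide along $\Gamma$, the cyclide normal $(\FG(y_0,z_0),2y_0,2z_0)$ from Lemma \ref{th:joincond} must be proportional to $(\cc,2y_0,2z_0)$. As the last two entries coincide and $(y_0,z_0)\neq(0,0)$ on $\Gamma$, the proportionality constant equals $1$, forcing $\FG(y_0,z_0)=\cc$ identically on $\Gamma$.

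I do not expect a serious obstacle: the envelope elimination is a two-line trigonometric computation, and the reverse implication reduces to matching two normal vectors that already agree in two coordinates. The only points to handle carefully are the standing hypothesis $r\neq 0$ (needed to divide by $2r$ and to have $\Gamma$ non-empty), and the correct sign convention when extracting $\sqrt{y^2+z^2}$ — which is resolved by noting that the tangent planes to a smooth cyclide along $\Gamma$ sweep out $\theta\in[0,2\pi)$, so both branches $\pm\sqrt{y^2+z^2}=r-\cc x/(2r)$ are realized and together give the single implicit equation (\ref{eq:coneFamily}) after squaring.
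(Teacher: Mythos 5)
Your proof is correct and follows essentially the same route as the paper: parametrize $\Gamma$ by the angle and compute the envelope of the one-parameter family of tangent planes, arriving at (\ref{eq:coneFamily}). The only cosmetic differences are that the paper obtains the characteristic lines via a cross product of the normal and the circle's tangent vector rather than by differentiating the plane equation in $\theta$, and your converse (matching the cone's normal $(\cc,2y_0,2z_0)$ along $\Gamma$ with the rescaled cyclide gradient) is more explicit than the paper's one-sentence justification.
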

			\begin{proof}
				We parametrize the circle by $(0,r\cos\varphi,r\sin\varphi)$.
				The envelope line passing through such a point is orthogonal to the rescaled gradient vector
				$\big(\cc,2r\cos\phi,2r\sin\phi\big)$ and to the tangent vector $(0,-\sin\phi,\cos\phi)$ to the circle.
				The line therefore follows the direction of the cross product vector
				$(2r,-\cc\cos\phi,-\cc\sin\phi)$. The envelope of tangent planes is parametrized therefore as  
				\begin{align}
					\label{eq:coneFamily0}
					(x,y,z)=(0,r\cos\varphi,r\sin\varphi) + t \, (2r,-\cc\cos\varphi,-\cc\sin\varphi).
				\end{align}
				Hence 
				$x=2rt$, $y^2+z^2 = (r-\cc t)^2$.
			Elimination of $t$ gives (\ref{eq:coneFamily}).
			The reverse is also true because the family (\ref{eq:coneFamily})
			contains all circular cones passing through the circle $\Gamma$. 
		\end{proof}
		
		\begin{remark}\rm
			\label{rem:PrVl}
			The envelope of tangent planes degenerates to the plane of the circle $\Gamma$ when $\cc=\infty$. If the circle is a Villarceau circle, then the envelope of tangent planes is a much more complicated surface of degree 4.
		\end{remark}
		
		\subsection{Smooth blending along principal circles}
		
		In this section, we focus on smooth blending between Dupin cyclide equations in the principal circle component. 
		The main case to investigate is by fixing a tangent cone along the circle $\Gamma$ and find Dupin cyclides in the principal circle component that fit the blending conditions along the circle, see Figure \ref{fig:joints}\refpart{a}.
		
		\begin{propose}
			\label{thm:fixedfamily}
			Let us fix the parameter $\cc\neq 0$ and the cone $(\ref{eq:coneFamily})$. 
			The family of Dupin cyclides in the principal circle component touching the cone along the circle $\Gamma$ is defined by the $5$ equations
			\begin{align}
				&v_2=\cc u_2,\quad v_3= \cc u_3, \quad v_4=\cc u_4,\\
				&4r^2u_1(\cc u_0-u_1)+\cc^2(u_2^2+u_3^2)-2\cc u_0u_4=0,\label{eq:cc4}\\
				&16r^4(\cc u_0-u_1)^2+4\cc^2r^2u_1^2-\cc^2(\cc^2+4r^2)(u_2^2+u_3^2)-8\cc^2r^2u_0v_1=0.\label{eq:cc5}
			\end{align}
		\end{propose}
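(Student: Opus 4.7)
The plan is to combine Lemma \ref{th:joincond} with Lemma \ref{th:F12const} to translate the geometric condition ``the Dupin cyclide is tangent to the fixed cone along $\Gamma$'' into algebra, and then to intersect the resulting constraints with the principal circle component described by Theorem \ref{th:m3p}. The cone $(\ref{eq:coneFamily})$ has the rational function $\FG$ identically equal to the constant $\cc$ along $\Gamma$, as shown in the proof of Lemma \ref{th:F12const}. By Lemma \ref{th:joincond}, smooth blending then requires that the Dupin cyclide also satisfies $\FG(y,z)\equiv\cc$ on $\Gamma$, i.e.\ the linear function $(v_2-\cc u_2)y+(v_3-\cc u_3)z+(v_4-\cc u_4)$ vanishes on the circle $y^2+z^2=r^2$. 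Since a nonzero linear polynomial cannot vanish on a full circle, this yields immediately the three stated linear relations $v_2=\cc u_2$, $v_3=\cc u_3$, $v_4=\cc u_4$.

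I would then substitute those three linear relations into the rank-$1$ conditions of Theorem \ref{th:m3p}. The matrix $\mathcal{N}$ has its second column equal to $\cc$ times the first, so $\mathrm{rank}(\mathcal{N})\leq 1$ is automatic. In the matrix $\mathcal{M}$, the second-column entries of the first three rows become $\cc u_i(\cc u_4-2r^2u_1)$, proportional to the first column; so the $2\times2$ minors internal to rows $1$--$3$ all vanish identically. The new non-trivial content must therefore arise from pairing rows $1$--$3$ with rows $4$--$9$, and from minors among the latter.

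Concretely, I would form the minor of rows $1$ and $4$ of $\mathcal{M}$: after substitution it equals $u_2$ times a quadratic polynomial in $u_0,u_1,u_2,u_3,u_4$ which, after rearrangement, is precisely relation (\ref{eq:cc4}). Similarly, the minor involving rows $4$ and $6$ — reduced modulo (\ref{eq:cc4}) — produces (\ref{eq:cc5}). It remains to show that every other minor of $\mathcal{M}$ lies in the ideal generated by $v_2-\cc u_2$, $v_3-\cc u_3$, $v_4-\cc u_4$, (\ref{eq:cc4}) and (\ref{eq:cc5}), so that no additional equations survive. This is best done by a Gröbner basis computation in $\RR(r,\cc)[u_0,\ldots,v_4]$, or by writing each remaining minor as an explicit polynomial combination of the five distinguished generators.

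The main obstacle is the bookkeeping for the $\binom{9}{2}=36$ two-by-two minors of $\mathcal{M}$: one must certify that after substituting the three linear relations, the resulting ideal collapses to the simple form stated. A secondary delicate point is the treatment of degenerate strata such as $u_2=u_3=u_4=0$ (where the first column of $\mathcal{N}$ vanishes, forcing $v_2=v_3=v_4=0$ consistently with the hypothesis $\cc\neq 0$, so that (\ref{eq:cc4}) and (\ref{eq:cc5}) still give the correct constraints) and the hypothesis $\cc\neq 0$ itself, which excludes the cylinder case alluded to in Remark \ref{rem:PrVl} and is used implicitly when dividing through to clear common factors during the reduction.
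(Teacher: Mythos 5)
Your overall route is the same as the paper's: use Lemmas \ref{th:joincond} and \ref{th:F12const} to turn tangency to the cone into the linear relations $v_2=\cc u_2$, $v_3=\cc u_3$, $v_4=\cc u_4$, substitute these into the ideal of $2\times 2$ minors of ${\cal N}$ and ${\cal M}$ from Theorem \ref{th:m3p}, and identify the resulting ideal with the one generated by the five stated equations via Gr\"obner-basis computations. However, two points need repair. First, the logic of ``it remains to show that every other minor lies in the ideal generated by the five equations'' gives only one inclusion, namely that solutions of the five equations lie in the principal circle component; the substantive half of the proposition is the converse, that the minors together with $v_i=\cc u_i$ force (\ref{eq:cc4}) and (\ref{eq:cc5}). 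Your rows $1\&4$ minor yields $u_2$ times the quadratic relation (and rows $2\&4$, $3\&4$ yield $u_3,u_4$ times it), so (\ref{eq:cc4}) is recovered only after localizing away from $u_2=u_3=u_4=0$. On that stratum the rank conditions plus $v_2=v_3=v_4=0$ do \emph{not} imply (\ref{eq:cc4})--(\ref{eq:cc5}): for example $[1,u_1,0,0,0;\,u_1^2/2,0,0,0]$, i.e.\ the double sphere $(x^2+y^2+z^2-r^2+u_1x)^2=0$, annihilates every minor of ${\cal N}$ and ${\cal M}$ and satisfies $v_i=\cc u_i$ trivially, yet violates (\ref{eq:cc4}) for generic $u_1$. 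So your parenthetical claim that on this stratum the two equations ``still give the correct constraints'' is not an algebraic consequence of the rank conditions; the stratum must be handled as the paper does, by checking case by case that all its points are degenerate cyclides (double spheres, the bare circle $\Gamma$, etc.) and therefore excluded from the family of Dupin cyclides. This is precisely why the paper localizes at $u_2u_3u_4\neq 0$ and then disposes of the vanishing strata separately.

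Second, the concrete source you name for (\ref{eq:cc5}) is wrong: the rows $4\&6$ minor of ${\cal M}$ contains the term $-8r^2u_0v_1^2$, so even after reduction modulo (\ref{eq:cc4}) (which does not involve $v_1$) it remains quadratic in $v_1$, whereas (\ref{eq:cc5}) is linear in $v_1$; that minor cannot ``produce'' (\ref{eq:cc5}) by such a reduction. The equation instead arises from the minors pairing rows $1$--$3$ with rows $7$--$9$: for instance, the rows $1\&7$ minor equals $u_2^2$ times a polynomial linear in $v_1$, and $2\cc u_0$ times that polynomial is congruent to (\ref{eq:cc5}) modulo (\ref{eq:cc4}). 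Since you defer to a Gr\"obner basis computation anyway, this is repairable, but as written your derivation of (\ref{eq:cc5}) fails. (Also, when matching the rows $1\&4$ minor against the printed (\ref{eq:cc4}), note that the computation gives the last term as $-2\cc^2u_0u_4$ rather than $-2\cc u_0u_4$; the former is the version consistent with the torus parametrization and the $J_0$ formula later in the paper.)
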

		\noindent{\em Proof.} 
		From Lemmas \ref{th:joincond}--\ref{th:F12const}, the tangency conditions along the circle are given by $v_i=\cc u_i$ for $i\in\{2,3,4\}$. 
		We specialize $u_0,v_2,v_3,v_4$ in the ideal generated by the $2\times2$ minors of ${\cal N}$ and ${\cal M}$, 
		and obtain an ideal $\DI_\lambda$ in $\DR_{\lambda}=\RR(r)[u_1,u_2,u_3,u_4,v_1,\lambda,\lambda^{-1}]$. 
		We notice many multiples of  $u_2,u_3,u_4$ in a Gr\"obner basis of $\DI_\lambda$. 
		If $u_2u_3u_4\neq0$, we obtain an ideal $\DI^*_\lambda\subset \DR_{\lambda}[(u_2u_3u_4)^{-1}]$ 
		generated by the 5 equations of the proposition.
		The points with $u_2u_3u_4=0$ satisfy the equations of $\DI^*_\lambda\cup \DR_{\lambda}$,
		by checking the cases $u_2=u_3=u_4=0$, $u_i=0, u_ju_k\neq0$ or $u_i=u_j=0,u_k\neq 0$ with $i,j,k\in \{2,3,4\}$ pairwise distinct. 
		Each of the resulting ideals $\DR_{\Gamma}[\cc,\cc^{-1}]$ contains $\DI^*_\lambda\cup \DR_{\lambda}$. \qed\\
		
		\begin{remark} \rm 
			\label{rem:generalFamily}
			The five equations of Proposition \ref{thm:fixedfamily} are linear in the five variables $u_4$, $v_1$, $v_2$, $v_3$, $v_4$. 
			Hence, we can easily solve the equations in those variables and obtain a parametrization of the family of Dupin cyclides touching the cone along the circle $\Gamma$.
			Apart from the first 3 equations, the variables $u_2,u_3$ appear only within the expression $u_2^2+u_3^2$, representing a rotational degree of freedom:
			rotating the two Dupin cyclide patches independently around the $x$-axis preserves the smooth blending along the circle $\Gamma$.
		\end{remark}
		
		\begin{figure}
			\begin{center}
				\begin{picture}(300,300)
					\put(20,300){\includegraphics[width=4.5cm]{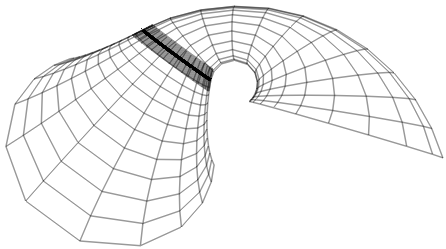}}  
					\put(200,300){\includegraphics[width=4cm]{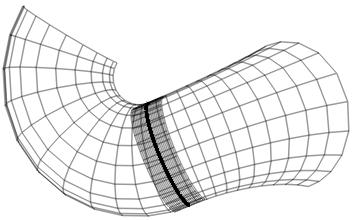}}
					\put(20,150){\includegraphics[width=4.5cm]{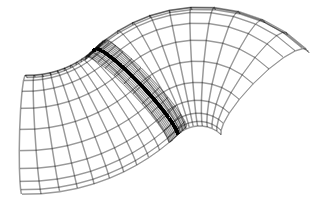}}
					\put(180,150){\includegraphics[width=4.5cm]{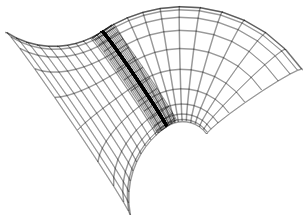}} 
					\put(0,0){\includegraphics[width=6cm]{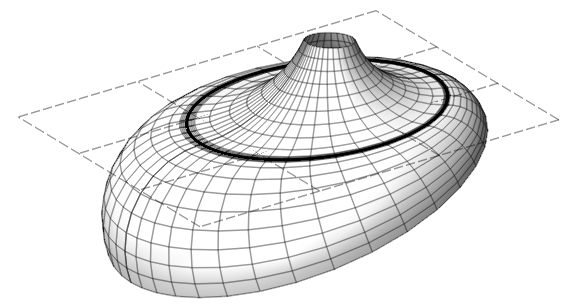}}
					\put(200,-5){\includegraphics[width=4cm]{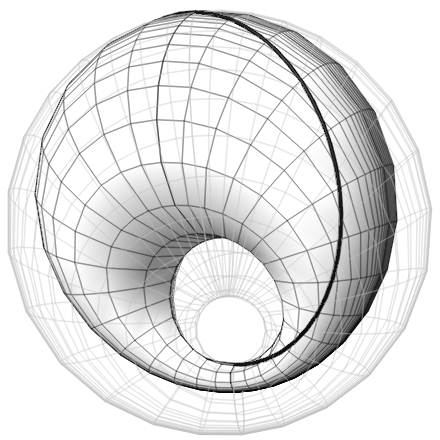}}
					\put(0,280){\refpart{a}}
					\put(15,280){$[1, -\frac{49}{30}, 0, \frac{76}{15}, \frac{323}{30};
						-\frac{1669}{120}, 0, -\frac{76}{15}, -\frac{323}{30}]$}
					\put(35,260){$[1, -2, -5, 0, \frac{17}{2}; -\frac{93}{8}, 5, 0, -\frac{17}{2}]$}
					\put(190,280){\refpart{b}}
					\put(205,280){$[1,0,-3,0,\frac{9}{2};\frac{-9}{2},0,0,0]$}
					\put(200,260){$[1,0,0,\frac{76}{15},\frac{323}{30}; -\frac{361}{30},0,0,0]$}
					\put(0,140){\refpart{c}}
					\put(20,140){$[\frac{17}{15},0,\frac{17}{3},0,\frac{85}{6};-\frac{85}{6},0,0,0]$}
					\put(28,125){$[1, 0, -3, 0, \frac{9}{2}; -\frac{9}{2}, 0, 0, 0]$}
					\put(190,140){\refpart{d}}
					\put(210,140){$[1, 0, 0, 0, -4; 8, 0, 0, 0]$}
					\put(205,125){$[1, 0, -3, 0, \frac{9}{2}; -\frac{9}{2}, 0, 0, 0]$}
					\put(5,-20){\refpart{e}}
					\put(30,-20){$[1, a, 0, 0, 0; \frac{4a^2+15}{8}, 1, 0, 2a]$}
					\put(160,-20){\refpart{f}}
					\put(175,-20){$[1+t, 0, 1, 0, \frac{12}{13}; \frac{2}{13}+2t, 0, -\frac{10}{13}, 0]$}
				\end{picture}   
			\end{center}\vspace{5mm}
			\caption{Two Dupin cyclide equations with different coefficient values $[u_0,\ldots,u_4;v_1,\ldots,v_4]$ are smoothly blended along the circle $\Gamma$ with $r=1$. The two cyclides in \refpart{e} are obtained from the parameter values $a=1$ and $a=1.8$. The two cases at \refpart{f} are obtained from the parameter values $t=0$ and $t=0.4$.}
			\label{fig:joints}
		\end{figure}
		
		The limit cases $\cc=0$ and $\cc=\infty$ of principal circle component also contain interesting families of Dupin cyclides. The family in the case $\cc=0$ allows us in particular to blend two torus equations or a torus and a Dupin cyclide, see Figure \ref{fig:joints}\refpart{b},\refpart{c},\refpart{d}. The family in the case  $\cc=\infty$ allows us in particular to blend a Dupin cyclide with a plane, see Figure \ref{fig:joints}\refpart{e}. 
		
		\begin{propose}
			\label{thm:cyl}
			Dupin cyclides touching the cylinder along the circle $\Gamma$ are defined by the equations
			\begin{align}
				& u_1=v_2=v_3=v_4=0, \label{eq:condCyl}\\
				& 2r^2u_0v_1+r^2(u_2^2+u_3^2)-(v_1+u_4)^2=0.\label{eq:cond2Cyl}
			\end{align}
			Dupin cyclides here are symmetric with respect to plane of the circle $\Gamma$. 
		\end{propose}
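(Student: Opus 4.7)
The plan is to combine the cylinder tangency condition from Lemma \ref{th:F12const} with the principal-circle characterization of Theorem \ref{th:m3p}. Touching the cylinder $y^2+z^2=r^2$ along $\Gamma$ is the $\cc=0$ case of Lemma \ref{th:F12const}, which via Lemma \ref{th:joincond} means the numerator $v_2y+v_3z+v_4$ of $\FG(y,z)$ vanishes identically on $\Gamma$. Being an affine function of $y,z$, this numerator vanishes on the entire circle $y^2+z^2=r^2$ only if its coefficients are zero, yielding $v_2=v_3=v_4=0$.

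With $v_2=v_3=v_4=0$, the matrix $\cal N$ of Theorem \ref{th:m3p} has zero second column, so ${\rm rank}\,\cal N\le 1$ is automatic. For $\cal M$, the substitution collapses rows $1$--$3$ to $(u_i,0)$ and rows $7$--$9$ to $(0,-8r^4u_1u_i)$ with $i\in\{2,3,4\}$, while rows $4$ and $5$ become $(2u_0,-4r^2u_1^2)$ and $(u_1,-4r^2u_1(v_1+u_4))$. The $2\times 2$ minors pairing rows $1$--$3$ against row $4$ are proportional to $u_1^2u_i$; these must vanish, and since a non-degenerate cyclide requires $(u_2,u_3,u_4)\neq(0,0,0)$ (the locus $u_2=u_3=u_4=0$ falls into the excised touching spheres/planes cases of Remark \ref{rem:skipPR}), we conclude $u_1=0$, completing (\ref{eq:condCyl}). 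Setting $u_1=0$ then makes rows $4,5,7,8,9$ of the second column of $\cal M$ vanish, leaving only row $6$ with the nontrivial entry $4r^4(u_2^2+u_3^2+2u_0v_1)-4r^2(v_1+u_4)^2$. Pairing this against any row $i\in\{1,2,3\}$ with $u_i\neq 0$ forces the entry to vanish, which after dividing by the common factor $4r^2$ is exactly (\ref{eq:cond2Cyl}).

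Conversely, if (\ref{eq:condCyl})--(\ref{eq:cond2Cyl}) hold, the substitutions together with (\ref{eq:cond2Cyl}) make the second columns of both $\cal N$ and $\cal M$ identically zero, so Theorem \ref{th:m3p} certifies a Dupin cyclide in the principal circle component. The symmetry claim follows by inspection: substituting $u_1=v_2=v_3=v_4=0$ into (\ref{eq:ldarb2}) kills every term containing an odd power of $x$ (the cubic term $2u_1 x(x^2+y^2+z^2-r^2)$ and the mixed terms $2xv_2y,\,2xv_3z,\,2xv_4$ all vanish), leaving an equation in which $x$ appears only through $x^2$. Thus the cyclide is invariant under $x\mapsto -x$, i.e., symmetric about the plane $x=0$ of $\Gamma$. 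The main obstacle is the careful unpacking of $\cal M$'s second column after the substitutions, and cleanly excluding via Remark \ref{rem:skipPR} the degenerate stratum $u_2=u_3=u_4=0$ that would otherwise allow $u_1\neq 0$.
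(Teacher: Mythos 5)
Your proposal is correct and follows essentially the same route as the paper: cylinder tangency ($\cc=0$) forces $v_2=v_3=v_4=0$, after which the principal-circle ideal (the $2\times2$ minors of ${\cal N}$ and ${\cal M}$) collapses to $u_1=0$ and \eqref{eq:cond2Cyl}, with the symmetry read off from the equation containing only even powers of $x$; you simply carry out the paper's ideal reduction explicitly by hand, discarding the degenerate locus $u_2=u_3=u_4=0$ (which, with $v_2=v_3=v_4=0$, gives the double-sphere cases of Remark \ref{rem:skipPR}). The only minor caution is that Theorem \ref{th:m3p} is stated as a necessary condition, so the vanishing second columns of ${\cal N}$ and ${\cal M}$ do not by themselves ``certify'' a non-degenerate Dupin cyclide, but this matches the paper's own level of detail in the converse direction.
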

		\noindent{\em Proof.} 
		The equations $v_2=v_3=v_4=0$ follow from the condition $\cc=0$. 
		With those constraints, the ideal of principal circle component  reduces to the other two equations $u_1=0$ and (\ref{eq:cond2Cyl}). The symmetry property with the plane $x=0$ follows from equations (\ref{eq:condCyl}). \qed\\
		
		\begin{propose}
			\label{thm:Plane}
			Dupin cyclides touching the plane of the circle $\Gamma$ along the same circle are defined by
			\begin{align}
				& u_2=u_3=u_4=0, v_4=2r^2u_1,\\
				&16r^4u_0^2+4r^2u_1^2-(v_2^2+v_3^2)-8r^2u_0v_1=0.
			\end{align}
		\end{propose}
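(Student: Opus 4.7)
The plan is to follow the same scheme as Propositions \ref{thm:fixedfamily} and \ref{thm:cyl}: first convert the geometric tangency condition into a linear constraint on the coefficients via Lemma \ref{th:F12const} and Remark \ref{rem:PrVl}, then specialize the principal-circle ideal of Theorem \ref{th:m3p}.

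The condition that the envelope of tangent planes along $\Gamma$ degenerates to the plane of $\Gamma$ corresponds, by Remark \ref{rem:PrVl}, to the limit $\cc=\infty$ in Lemma \ref{th:F12const}. Equivalently, $\FG(y,z)\equiv\infty$ on $\Gamma$, which forces the denominator $u_2y+u_3z+u_4$ to vanish identically on $\Gamma$. Parametrizing $\Gamma$ as $(0,r\cos\varphi,r\sin\varphi)$ and invoking the linear independence of $\{1,\cos\varphi,\sin\varphi\}$ yields $u_2=u_3=u_4=0$.

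Next I would substitute these vanishings into the $2\times 2$ minors of $\cal N$ and $\cal M$ that define the principal-circle component. The first column of $\cal N$ becomes zero, so the $\cal N$ rank condition is trivial. In $\cal M$, the first three rows collapse to $(0,\,v_i(v_4-2r^2u_1))$ for $i\in\{2,3,4\}$, and the minors involving them with rows 4--9 take the form $v_i(v_4-2r^2u_1)\,c_j$, where $c_j\in\{2u_0,u_1,v_1,v_2,v_3,v_4\}$ runs over the first-column entries. This splits into the branch $v_4=2r^2u_1$ or the branch $v_2=v_3=v_4=0$. On the generic branch $v_4=2r^2u_1$, rows 1--3 of $\cal M$ vanish identically, and a short computation shows that each of rows 5--9 has its second column equal to $4r^2(2r^2u_0-v_1)$ times the first. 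Equating this common proportionality against row 4, whose entries are $(2u_0,\,v_2^2+v_3^2-4r^2u_1^2)$, gives
\[
v_2^2+v_3^2-4r^2u_1^2 \;=\; 8r^2u_0(2r^2u_0-v_1),
\]
which rearranges to the stated quadratic equation.

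The main obstacle is to discard the alternative branch $v_2=v_3=v_4=0$ with $v_4\neq 2r^2u_1$. In that case the cyclide equation reduces to $u_0(x^2+y^2+z^2-r^2)^2+2u_1 x(x^2+y^2+z^2-r^2)+2v_1 x^2=0$, a quadratic in the two expressions $x^2+y^2+z^2-r^2$ and $x$ that factors over $\RR$ into a product of two sphere equations when $u_1^2-2u_0v_1\ge 0$, and otherwise has no real points; in either case this is a degenerate cyclide to be discarded. For the converse direction, substituting the defining equations back into $\FG$ shows that the envelope is the plane $x=0$, and verification of the principal-circle ideal from Theorem \ref{th:m3p} certifies that the surface is a non-degenerate Dupin cyclide with $\Gamma$ as a principal circle.
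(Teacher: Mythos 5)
Your proposal is correct and follows essentially the same route as the paper: impose the tangency condition $\cc=\infty$ (equivalently, $u_2y+u_3z+u_4\equiv 0$ on $\Gamma$, hence $u_2=u_3=u_4=0$) and then reduce the principal-circle ideal of Theorem \ref{th:m3p}, which yields $v_4=2r^2u_1$ and the quadratic relation; your explicit minor computation and the elimination of the branch $v_2=v_3=v_4=0$ just spell out the ideal reduction the paper leaves implicit. One small slip that does not affect the conclusion: on that discarded branch with $u_1^2-2u_0v_1<0$ the real locus is not empty but degenerates to the circle $\Gamma$ itself, which is still a degenerate case to be excluded.
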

		\noindent{\em Proof.} 
		The proof is similar to the proof of Proposition \ref{thm:cyl}. The equations $u_2=u_3=u_4=0$ follow from the tangency condition $\lambda=\infty$ and the ideal of principal circles reduces to the other two equations.
		\qed\\
		
		\begin{remark}\rm
			\label{rem:Plane}
			A cubic cyclide equation among the family in Proposition \ref{thm:Plane} degenerates to a reducible surface (touching sphere/plane).   
		\end{remark}
		
		It is interesting to distinguish toruses in the principal circle component. 
		We get two cases, depending on the position of the circle $\Gamma$ (wrapping around the torus hole or around the torus tube). 
		Figure \ref{fig:joints}\refpart{c} and \refpart{d} illustrate two different configuration of torus blending using those two kinds of principal circles. The circle wraps around the torus tube of both toruses in Figure \ref{fig:joints}\refpart{c}. The circle wraps around the torus tube for one torus and around the torus hole for the other torus in Figure \ref{fig:joints}\refpart{d}.
		
		\begin{propose}
			The equation \eqref{eq:ldarb2} defines a torus in the principal circle component if and only if one of the following applies:
			\begin{enumerate}[(i)]
				\item $u_0=1,\; u_2^2+u_3^2=2u_0u_4,\; v_1=-u_4,\; v_2=v_3=v_4=0$;
				\item $u_0=1,\; u_2=u_3=v_2=v_3=0$, $\displaystyle\; u_4 = \frac{2r^2u_1(\cc-u_1)}{\cc^2}$, \\[3pt]
				$\displaystyle  v_1= \frac{\cc^2u_1^2+4r^2(\cc-u_1)^2}{2\cc^2}$, $\displaystyle\; v_4=\cc u_4=\frac{2r^2u_1(\cc-u_1)}{\cc}$.
			\end{enumerate}
		\end{propose}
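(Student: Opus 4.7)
The plan is to split the argument by the geometric role of $\Gamma$ on the torus: either a parallel circle, giving case~(ii), or a meridian circle, giving case~(i). Both implications are then verified by direct coefficient matching against a canonical torus equation in $\RR^3$.

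For the necessary direction, a torus $T$ has a unique axis of revolution $L$, and since $\Gamma$ is a principal circle of $T$ it must be either a parallel or a meridian. In the parallel subcase, $L$ passes through the origin perpendicular to the plane $x=0$, so $L$ is the $x$-axis; rotational invariance of the equation then forces $u_2=u_3=v_2=v_3=0$, while $u_0=1$ since a non-degenerate torus is quartic. Writing the torus as
\[
\bigl((x-x_0)^2+y^2+z^2+R^2-r_t^2\bigr)^2-4R^2(y^2+z^2)=0,
\]
subject to $(r-R)^2+x_0^2=r_t^2$ from $\Gamma\subset T$, I would expand and match coefficients with (\ref{eq:ldarb2}) to obtain
$u_1=-2x_0$, $u_4=2R(r-R)$, $v_1=2(x_0^2+R^2)$, $v_4=-4Rrx_0$; setting $\cc:=v_4/u_4=-2rx_0/(r-R)$ then gives exactly the formulas of~(ii).

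In the meridian subcase, $L$ lies in the plane $x=0$ at distance equal to the major radius $R$, and the tube radius equals $r$. A rotation around the $x$-axis (preserving $\Gamma$) places $L$ parallel to the $y$-axis through $(0,0,R)$; squaring $\bigl(\sqrt{x^2+(z-R)^2}-R\bigr)^2+y^2=r^2$ then reduces to
\[
(s-r^2)^2+4(s-r^2)(R^2-Rz)-4R^2x^2=0,\qquad s=x^2+y^2+z^2,
\]
whose coefficients $u_0=1$, $u_3=-2R$, $u_4=2R^2$, $v_1=-2R^2$, with the rest zero, satisfy $u_2^2+u_3^2=2u_4$ and $v_1=-u_4$, i.e.\ the relations of~(i) with $u_1=0$. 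A general $x$-axis rotation rotates $(u_2,u_3)$ as a planar vector and preserves all constraints of~(i).

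For sufficiency, in case~(ii) I would substitute the listed formulas into (\ref{eq:ldarb2}) and recognise the result as the canonical torus above, after solving for $R$ and $x_0$ in terms of $u_1$ and $\cc$. The main obstacle I expect lies in case~(i), because $u_1=0$ is not stated explicitly there and must be extracted from the principal-circle rank condition of Theorem~\ref{th:m3p}: substituting the relations of~(i) into the second column of ${\cal M}$ makes every entry vanish except $-4r^2u_1^2$ in row~$4$, $-4r^4u_1^2$ in row~$6$ and $-8r^4u_1u_j$ in rows~$7$--$9$, while the first column has the non-zero entry $2u_0=2$ in row~$4$; hence $\mathrm{rank}({\cal M})=1$ forces $u_1=0$. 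An $x$-axis rotation then reduces (\ref{eq:ldarb2}) to the meridian-torus form identified above, completing the argument. Once this deduction of $u_1=0$ is established, the remaining verifications are routine coefficient identifications.
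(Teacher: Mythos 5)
Your proof is correct, and it reaches the two cases by the same geometric dichotomy as the paper ($\Gamma$ a meridian of the tube versus a parallel around the hole), but the derivation inside each case runs in the opposite direction. The paper works top--down from the component: it invokes Propositions \ref{thm:cyl} and \ref{thm:fixedfamily} (tangent cylinder, resp.\ cone, along $\Gamma$) to cut the principal-circle ideal down to a small family, uses the cross-section with $x=0$ (two equal-radius circles, resp.\ concentric circles) to get $u_2^2+u_3^2=2u_0u_4$, resp.\ $u_2=u_3=0$, and then completes the square to see when the residual family is a torus. You work bottom--up: you parametrize all tori having $\Gamma$ as a parallel (axis $=x$-axis) or as a meridian (axis in $x=0$ at distance $R$), expand, and match coefficients against \eqref{eq:ldarb2}; your identifications $u_1=-2x_0$, $u_4=2R(r-R)$, $v_1=2(x_0^2+R^2)$, $v_4=-4Rrx_0$ and $\cc=-2rx_0/(r-R)$ do reproduce the formulas of (ii), and the meridian computation reproduces (i). Your approach is more self-contained (it does not need the tangent-envelope propositions for necessity), while the paper's makes visible how the conditions sit inside the blending framework of Section \ref{sec:join}. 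One point where your write-up is actually more careful than the paper's: in case (i) the condition $u_1=0$ is not listed in the statement, and you correctly observe that for the ``if'' direction it must be extracted from ${\rm rank}({\cal M})=1$ (e.g.\ the minor of rows 4 and 5 of ${\cal M}$ reduces under (i) to $4r^2u_1^3$); the paper obtains $u_1=0$ only in the necessity direction, via Proposition \ref{thm:cyl}, and leaves this gap in the sufficiency direction unaddressed. Two very minor loose ends you share with the paper: the sign ambiguity $(r\pm R)^2+x_0^2=r_t^2$ (absorbed by allowing $R<0$, so harmless), and the degenerate subcase $u_4=0$ of (i), which gives a double sphere rather than a torus.
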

		\noindent{\em Proof.} 
		Assume that the circle $\Gamma$ is wrapping around the torus tube. Then we have a tangent cylinder along the circle, encoded by $v_2=v_3=v_4=0$ as in Proposition \ref{thm:cyl}.
		The cross section of (\ref{eq:ldarb2}) with the plane $x=0$ is a pair of circles of the same radius $(\Gamma,\Gamma')$
		\[\Gamma': x=\left(y+\frac{u_2}{u_0}\right)^2+\left(z+\frac{u_3}{u_0}\right)^2-\frac{r^2u_0^2-2u_0u_4+u_2^2+u_3^2}{u_0^2}=0.\]
		Besides the equality between radiuses, we have $u_2^2+u_3^2=2u_0u_4$. Hence the equation \eqref{eq:cond2Cyl} factors into $(v_1+u_4)(v_1+u_4-2r^2u_0)$.
		To recognize a torus equation, we can assume that $u_3=0$ and $u_2=\sqrt{2u_0u_4}$ by applying rotation preserving the circle $\Gamma$. The rotated cyclide equation is
		\begin{equation}
			u_0 \! \left( \! x^2 \! + \! \left(y \!-\! \sqrt{\frac{u_4}{2u_0}}\right)^{\!2} \!+\! z^2 \!- r^2 \! +\frac{u_4}{2u_0}\right)^{\!\!2} \! 
			+ 2u_4 \! \left(y \!-\! \sqrt{\frac{u_4}{2u_0}}\right)^{\!2} \! +2x^2v_1=0.
		\end{equation}
		This is a torus only if $u_0=1$ and $v_1=-u_4$. This proves \refpart{i}.
		
		Assume now that the circle $\Gamma$ is wrapping around the torus hole. 
		Then we have a tangent cone along the circle, i.e. $v_2=\cc u_2$, $v_3=\cc u_3$, $v_4=\cc u_4$ as in Proposition \ref{thm:fixedfamily}. 
		The section with $x=0$ should be a pair of concentric circles. 
		Hence, $u_2=u_3=0$. 
		Again with $u_0=1$ and the parametrization in Proposition \ref{thm:fixedfamily}, the cyclide equation reduces to
		\begin{align*}
			\left(\left(x+\frac{u_1}{2}\right)^2+y^2+z^2
			+\frac{r^2(\cc-u_1)^2}{\cc^2}- \frac{u_1^2 (\cc^2+4r^2)}{4\cc^2}\right)^2\\
			-4\frac{r^2(\cc-u_1)^2}{\cc^2} (y^2+z^2)=0.  
		\end{align*}
		This is a torus equation, see (\ref{eq:torus}). \qed\\
		
		\subsection{Smooth blending along Villarceau circles}
		Due to different tangency conditions along the circle $\Gamma$, see Remark \ref{rem:PrVl}, it is not possible to smoothly blend Dupin cyclides from different components (principal and Villarceau). So it is left to investigate blending between cyclides in the Villarceau circle component. 
		The following result is illustrated in Figure \ref{fig:joints}\refpart{f}.
		
		\begin{propose}
			Assume that $(\ref{eq:ldarb2})$ defines a Dupin cyclide in the Villarceau circle component.
			Then the pencil obtained by perturbing $(\ref{eq:ldarb2})$ by 
			\[(x^2+y^2+z^2-r^2)^2+4r^2x^2\] 
			consists of Dupin cyclides in the same Villarceau circle component. Furthermore, elements of this pencil are the only Dupin cyclides in this component satisfying the smooth blending conditions with $(\ref{eq:ldarb2})$ along the circle $\Gamma$. 
		\end{propose}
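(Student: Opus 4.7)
The plan is to prove the two directions of the \emph{if-and-only-if} separately, using Lemma \ref{th:joincond} together with the defining equations of the Villarceau circle component from Theorem \ref{th:m3}.

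For the forward direction, I would observe that adding $\mu\bigl[(x^2+y^2+z^2-r^2)^{2}+4r^{2}x^{2}\bigr]$ to (\ref{eq:ldarb2}) shifts only the two coefficients $u_0\mapsto u_0+\mu$ and $v_1\mapsto v_1+2r^{2}\mu$, leaving the other seven coordinates unchanged. The first linear equation $v_4-2r^{2}u_1=0$ of (\ref{eq:vv2v3}) is untouched, and in $v_1+2u_4-2r^{2}u_0=0$ the two shifts cancel exactly. The quadratic equations (\ref{eq:vv2v3a}) and the strict inequality (\ref{eq:noneq}) involve only unchanged coefficients, hence are automatically preserved. Smooth blending between any two members of the pencil then follows from Lemma \ref{th:joincond}, since the rational function $\mathcal{F}(y,z)$ of (\ref{eq:g1join}) depends only on $u_2,u_3,u_4,v_2,v_3,v_4$.

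For the converse, suppose $\tilde F$ is another Dupin cyclide in the Villarceau component smoothly blending with $F$ along $\Gamma$. I would work modulo $I_\Gamma^{2}$, where $I_\Gamma=(x,Q)$ with $Q=x^2+y^2+z^2-r^2$. A direct expansion of (\ref{eq:ldarb2}) gives
\[
F\equiv 2Q\,(u_2y+u_3z+u_4)+2x\,(v_2y+v_3z+v_4)\pmod{I_\Gamma^{2}},
\]
since $Q^{2}$, $xQ$, and $x^{2}$ all belong to $I_\Gamma^{2}$. The smooth blending criterion translates to $\tilde F-\lambda F\in I_\Gamma^{2}$ for some proportionality factor $\lambda\in\RR[\Gamma]$; I would show that $\lambda$ must be a constant by reducing it to a polynomial of degree $\leq 1$ in $y,z$ and comparing top-degree coefficients in the resulting identity $\lambda(u_2y+u_3z+u_4)-(\tilde u_2y+\tilde u_3z+\tilde u_4)=h(y^{2}+z^{2}-r^{2})$; the associated homogeneous linear system has determinant $u_2^{2}+u_3^{2}$, which is nonzero exactly by the strict Villarceau inequality (\ref{eq:noneq}). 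A short additional verification (evaluating the putative identity $QP+xR\in I_\Gamma^{2}$ at $x=0$, then dividing by $x$) then forces $\tilde u_i=\lambda u_i$ and $\tilde v_i=\lambda v_i$ for $i\in\{2,3,4\}$. Rescaling $\tilde F$ projectively so that $\lambda=1$, these six coefficients coincide with those of $F$.

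With these matchings, the equation $\tilde v_4=2r^{2}\tilde u_1$ combined with $v_4=2r^{2}u_1$ forces $\tilde u_1=u_1$, and the second linear equation of (\ref{eq:vv2v3}) applied to both cyclides yields $\tilde v_1-v_1=2r^{2}(\tilde u_0-u_0)$. The equations (\ref{eq:vv2v3a}) and the inequality (\ref{eq:noneq}) hold automatically, since their variables agree between $F$ and $\tilde F$. Setting $\mu=\tilde u_0-u_0$, the difference $\tilde F-F=\mu Q^{2}+2(\tilde v_1-v_1)x^{2}$ simplifies to $\mu\bigl[(x^2+y^2+z^2-r^2)^{2}+4r^{2}x^{2}\bigr]$, placing $\tilde F$ in the claimed pencil. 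The principal technical difficulty is ruling out a non-constant proportionality factor $\lambda\in\RR[\Gamma]$ in the passage from Lemma \ref{th:joincond}'s pointwise tangent-plane matching to a global modular equivalence: the strict Villarceau inequality is exactly what is needed for this step.
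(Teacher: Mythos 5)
Your forward direction is correct and is essentially the paper's own observation: the perturbation only shifts $u_0\mapsto u_0+\mu$, $v_1\mapsto v_1+2r^2\mu$, the two shifts cancel in the second equation of (\ref{eq:vv2v3}), the remaining equations (\ref{eq:vv2v3a}) and the inequality (\ref{eq:noneq}) involve only unchanged coefficients, and $\FG$ in (\ref{eq:g1join}) is untouched. The problem is in your converse, precisely at the step you yourself flag as the crux: forcing the proportionality factor $\lambda$ to be constant. Two things go wrong there. First, $\lambda$ is a priori only the rational function $(\tilde u_2y+\tilde u_3z+\tilde u_4)/(u_2y+u_3z+u_4)$ on $\Gamma$; the coordinate ring $\RR[y,z]/(y^2+z^2-r^2)$ does not reduce elements to polynomials of degree $\le 1$, so your reduction of $\lambda$ to a linear polynomial, on which the whole determinant computation rests, is unjustified. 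Second, $u_2^2+u_3^2\neq 0$ is not the relevant non-degeneracy condition. Non-proportional pairs $(\tilde u_2,\tilde u_3,\tilde u_4;\tilde v_2,\tilde v_3,\tilde v_4)$ satisfying the blending condition of Lemma \ref{th:joincond} exist exactly when the two linear forms $u_2y+u_3z+u_4$ and $v_2y+v_3z+v_4$ have a common zero on the complexified circle (then their restrictions to $\Gamma$ share a factor and the ratio does not determine them up to a constant), and this can happen with $u_2^2+u_3^2\neq 0$, e.g.\ $y-r$ and $z$ share the zero $(r,0)$. So as written the argument does not exclude extra blending cyclides, which is the whole content of the ``only'' claim.

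To close the gap you must show that on the Villarceau component such a common zero cannot occur: a common real zero on $\Gamma$ would make the gradient of (\ref{eq:ldarb2}) vanish there, i.e.\ a singular point of the cyclide on $\Gamma$, which the strict inequality (\ref{eq:noneq}) excludes (the component consists of smooth cyclides, $J_0>0$); a common zero at the circular points at infinity forces $u_2=u_3=0$, and a conjugate pair of common zeros forces the two linear forms to be proportional, both of which contradict (\ref{eq:noneq}) combined with the equations (\ref{eq:vv2v3})--(\ref{eq:vv2v3a}). With coprimality established, proportionality with constant $\lambda$ follows (your endgame — fixing $\tilde u_1$ via $v_4=2r^2u_1$, the relation $\tilde v_1-v_1=2r^2(\tilde u_0-u_0)$, and the identification of the difference with $\mu\bigl[(x^2+y^2+z^2-r^2)^2+4r^2x^2\bigr]$ — is fine, provided you also note $\lambda\neq0$, which again follows from (\ref{eq:noneq}) applied to $\tilde F$). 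The paper avoids this analysis by a different mechanism: it assembles the two linear Villarceau equations and the five tangency conditions of Lemma \ref{th:joincond} into an explicit $7\times 8$ linear system in the unknown coefficients of $\tilde F$ and uses that this matrix has full rank (together with $v_i\neq0$ for some $i\in\{2,3,4\}$, excluded degenerations being horn cyclides), which directly yields the one-parameter pencil; your proportionality claim is meant to play the role of that rank statement but is not established.
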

		\noindent{\em Proof.}
		Let $D'=[u_0,u_1',\dots,u_4';v_1',\dots,v_4']$ be a Dupin cyclide touching the given Dupin cyclide $D$ along the circle $\Gamma$ in the Villarceau circle component. We obtain the following equations in matrix form:
		\begin{equation*}
			\begin{pmatrix}
				0  & 0  & 0  & 2  & 1  & 0  & 0  & 0 \\
				-2r^2  & 0  & 0  & 0  & 0  & 0  & 0  & 1 \\ 
				0  &  r^2v_2  & 0  & v_4  & 0  & -r^2u_2  & 0  & -u_4 \\ 
				0  & 0  &  r^2v_3  & v_4  & 0  & 0  & -r^2u_3  & -u_4 \\
				0  &  v_3  &  v_2  & 0  & 0  & - u_3  & - u_2  & 0 \\
				0  & v_4  & 0  &  v_2  & 0  &  -u_4  & 0  & -u_2\\
				0  & 0  & v_4  &  v_3  & 0  & 0  &  -u_4  & -u_3 
			\end{pmatrix}
			\begin{pmatrix}
				u_1'\\ u_2'\\ u_3'\\ u_4'\\
				v_1'\\ v_2'\\ v_3'\\ v_4'
			\end{pmatrix}
			=
			\begin{pmatrix}
				2 r^2u_0\\0\\ 0\\ 0\\
				0\\ 0\\ 0\\ 0
			\end{pmatrix}.
		\end{equation*}
		The first two rows of the matrix are linear equations obtained from $D'$ being in the Villarceau circle component. The last $5$ rows are the tangency conditions to the given Dupin cyclide $D$ from Lemma \ref{th:joincond}. Note that the 7 by 8 matrix has full rank. We must have $v_i\neq 0$ for some $i\in \{2,3,4\}$ due to degeneracy to horn cyclides. Then by setting $s=u_i'/v_i$, we can solve
		\begin{align}
			u_j' &= su_j,\quad v_j' = sv_j,\quad j\in \{2,3,4\},\\
			u_1' &=s\frac{v_4}{2r^2} = su_1, \quad v_1' = 2r^2u_0-2su_4.
		\end{align}
		Hence, dividing the equation of $D'$ by $s$, all coefficients are fixed except $v_1'=2r^2u_0/s-2u_4$ and $u_0$ becomes $u_0/s$. Hence with $t=u_0/s-u_0$, $u_0$ and  $v_1'$ become $u_0+t$ and $2r^2u_0-2u_4+2r^2t=v_1+2r^2t$ respectively.
		The obtained family is neccessary in the Villarceau circle component because only the second equation in Theorem \ref{th:m3} depends on $u_0$ and $v_1$, and it is preserved by the perturbation.
		\qed
		
		\section{The M\"obius invariant $J_0$}
		\label{sec:invariant}
		
		In this section, we compute a M\"obius invariant denoted by $J_0$ \cite[Section 6]{MV} specifically for Dupin cyclides in the Villarceau and principal circle components. 
		This invariant extends the toric invariant 
		\begin{equation}
			J_0=\frac{r^2}{R^2}\left(1-\frac{r^2}{R^2}\right)
		\end{equation} 
		to any Dupin cyclide equation.
		The smooth Dupin cyclides are characterized by $0< J_0\leq 1/4$ and the singular Dupin cyclides are characterized by $J_0\leq 0$.
		Note that a singular Dupin cyclides can be obtained from a spindle or a horn torus (see Figure \ref{fig:torus-sing}) by M\"obius transformations.
		It is convenient to subtract $1/4$ from $J_0$ and obtain a perfect square on the remainder
		\begin{equation}
			J_0=\frac{1}{4}-\frac{(R^2-2r^2)^2}{4R^4}.    
		\end{equation}
		
		We use \cite[(6.15)]{MV} and \cite[(6.17)]{MV} to compute $J_0$ for the quartic equation (\ref{eq:ldarb2}) with $u_0\neq 0$  and the cubic equation (\ref{eq:ldarb2}) with $u_0= 0$ respectively. 
		The obtained expression necessarily gives the M\"obius invariant when the equation defines a Dupin cyclide. Let us denote by $\widehat{J}_0$ the remainder $1/4-J_0$ of  (\ref{eq:ldarb2}).
		The goal is to have a compact equivalent formula for $J_0$ in each of the two components.
		
		In the Villarceau circle component, by reducing both numerator and denominator of  $\widehat{J}_0$ with the ideal of this component generated by (\ref{eq:vv2v3})--(\ref{eq:vv2v3}), we obtain a common factor. The common factor is cancelled by the fraction and the invariant simplifies to
		\begin{align} \label{eq:villj0}
			J_0
			&=\frac{1}{4}-\frac{r^2v_1^2}{4\big(r^2(v_1^2+v_2^2+v_3^2)-v_4^2)}\\
			&=\frac{r^2u_2^2+r^2u_3^2-u_4^2}{16(r^2u_2^2+r^2u_3^2-u_4^2)+4v_1^2}.
		\end{align}
		The second expression is obtained from the first one by eliminating $v_2$, $v_3$ and $v_4$ from (\ref{eq:vv2v3})--(\ref{eq:vv2v3}). This invariant coincides with the invariant computed using the formula for the cubic cyclides. This invariant should be positive, because singular cyclides have no real Villarceau circles. 
		Indeed, the numerator is positive by the inequality (\ref{eq:noneq}). 
		The denominator is positive as well from the same condition.  
		As a limiting case of Theorem \ref{th:m3}, 
		Villarceau circles could be identified on the horn cyclides with $J_0=0$, 
		where they coincide with the (vertical) circles passing through the singularity.
		
		\begin{figure}
			\begin{center}
				\begin{picture}(320,100)
					\put(10,0){\includegraphics[width=4.1cm]{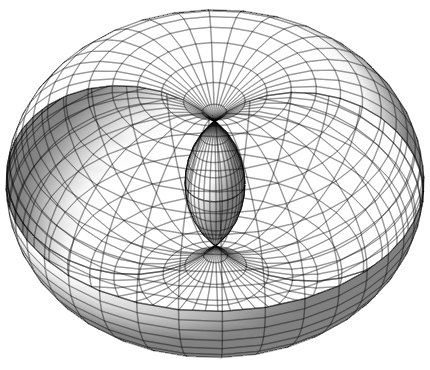}}
					\put(180,0){\includegraphics[width=4.5cm]{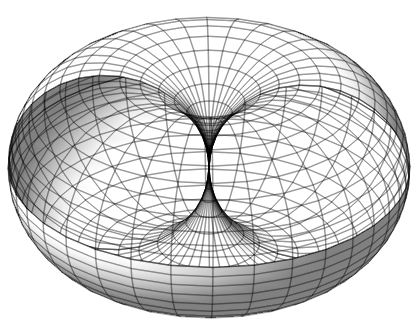}}
					\put(10,-4){\refpart{a}}  \put(180,-4){\refpart{b}}
				\end{picture}
			\end{center}
			\caption{A cutaway view of singular toruses: \refpart{a} a spindle torus ($J_0\!<0,r>R$);  
				\refpart{b} a horn torus ($J_0=0$, $r=R$).}
			\label{fig:torus-sing}
		\end{figure}
		
		In the principal circle component, we first consider quartic Dupin cyclides in Proposition \ref{thm:fixedfamily}. Consider the ideal $\DI_{\cc}$ generated by the 5 equations of the Proposition. By incorporating separately the numerator and the denominator of $\widehat{J}_0$ in the ideal $\DI_{\cc}$ and by eliminating the linear variables  $u_4,v_1,\ldots,v_4$, we obtain a representative numerator and a representative denominator with a common factor. This gives a new expression of $\widehat{J}_0$ up to a constant multiplier. It is easy to find this constant by solving it from the difference of the two expressions of $\widehat{J}_0$ modulo $\DI_{\cc}$. 
		The resulting $J_0$ expression is 
		\begin{align}
			J_0 = \frac{1}{4}-\frac{\big(8r^4(\cc u_0-u_1)^2-4r^2(\cc^2+4r^2)u_1^2+\cc^2(\cc^2+2r^2)(u_2^2+u_3^2)\big)^2}{16r^4\big(4r^2(\cc u_0-u_1)^2-\cc^2(u_2^2+u_3^2)\big)^2}.
		\end{align}
		By further elimination of $u_2^2+u_3^2$ using (\ref{eq:cc4})--(\ref{eq:cc5}), we obtain the more compact form
		\begin{align} \label{eq:j0genpc}
			J_0 = \frac{1}{4}-\frac{\big(4r^4\cc u_0-2r^2(\cc^2+6r^2)u_1+\cc(\cc^2+2r^2)u_4\big)^2}{16r^4\big(2r^2\cc u_0-2r^2u_1-\cc u_4\big)^2}.
		\end{align}
		It is interesting that this compact form (\ref{eq:j0genpc}) also covers the $J_0$ expression of the family of cubic Dupin cyclides $u_0=0$ in Proposition \ref{thm:fixedfamily}.
		
		Since the majority of Dupin cyclides in the principal circle component belong to the family of Dupin cyclides in Proposition \ref{thm:fixedfamily}, three equivalent expressions for $J_0$ in the principal circle component are obtained by substituting $\cc=v_i/u_i$ to (\ref{eq:j0genpc}) for each $i=2,3,4$. This coincidence also can be checked by reducing the numerator of the difference between the general $J_0$ and each of the found three expressions modulo the ideal of principal circle component.
		
		In the two limit cases Propositions \ref{thm:cyl} and \ref{thm:Plane} of principal circle component, we use the same method and obtain the following expression:
		
		\begin{align}
			J_0=\frac{1}{4}-\frac{(4r^2u_0-4u_4-3v_1)^2}{4v_1^2},
		\end{align}    
		for the family of Dupin cyclides in Proposition \ref{thm:cyl}, and
		\begin{align}
			J_0=\frac{1}{4}-\frac{(3r^2u_0-v_1)^2}{4r^4u_0^2},
		\end{align} 
		for the family of Dupin cyclides in Proposition \ref{thm:Plane}. Note that the latter formula is well-defined because the family in Proposition \ref{thm:Plane} does not contain non-degenerate cubic Dupin cyclides, see Remark \ref{rem:Plane}.
		
		\section*{Acknowledgments}
		This work is part of a project that has received funding from the European Union’s Horizon 2020 research and innovation programme under the Marie Skłodowska-Curie grant agreement No 860843.
		
		\small
		\bibliographystyle{alpha}

	\end{document}